\theoremstyle{plain}
\newtheorem{theorem}{Theorem}[section]
\newtheorem{corollary}[theorem]{Corollary}
\newtheorem{lemma}[theorem]{Lemma}
\newtheorem{prop}[theorem]{Proposition}
\newtheorem*{no-num-theorem}{Theorem}
\theoremstyle{definition}
\newtheorem{define}[theorem]{Definition}
\newtheorem*{remark}{Remark}
\def \R {\mathbf{R}}
\def \Z {\mathbf{Z}}
\def \C {\mathbf{C}}
\def \S {\mathbf{S}}
\def\RR{\mathcal{R}}
\def\DD{\mathcal{D}}
\numberwithin{equation}{section}
\def\SLC{{\rm SL}(2,\C)}
\def\SU{{\rm SU}(2,1)}
\def\PU{{\rm PU}(2,1)}
\def\SUtwo{{\rm SU}(2)}
\def\SUn{{\rm SU}(n)}
\def\su{{\frak {su}(2,1)}}
\def\U21{{\rm U}(2,1)}
\def\u21{{\frak {u}(2,1)}}
\def\Utwo{{\rm U}(2)}
\def\Uone{{\rm U}(1)}
\def\fg{\pi_1}
\begin{document}
\title[On the Burns-Epstein invariants of spherical CR 3-manifolds]{On the Burns-Epstein invariants of  \\ spherical CR 3-manifolds}
\author {Vu The Khoi}
\address{ Institute of Mathematics, 18 Hoang Quoc Viet road, 10307, Hanoi, Vietnam}
\email{vtkhoi@math.ac.vn}
\subjclass{Primary  32V05, 58J28; Secondary 32Q20}
\keywords{Spherical CR structures, Burns-Epstein invariant, Chern-Simons invariant}

\begin{abstract}
In this paper we develop a method to compute the Burns-Epstein invariant of a spherical CR homology sphere, up to an integer, from its holonomy representation. As an application, we give a formula for the Burns-Epstein invariant, modulo an integer, of a spherical CR structure on a Seifert fibered homology sphere  in terms of its holonomy representation. 
\end{abstract}
\maketitle
\section{Introduction} 
\vskip0.1cm
In \cite{be1}, Burns and Epstein define a global, biholomorphic, $\R$-valued invariant $\mu$ of a compact, strictly 
pseudoconvex $3$-dimensional CR manifold $M$ whose holomorphic tangent bundle is trivial. 
As the Burns-Epstein invariant is defined through the transgression form, it depends on the Cartan connection of the CR 
structure in a delicate way. Therefore it is not easy to compute the Burns-Epstein invariants for a general CR 3-manifold. In Burns-Epstein's work \cite{be1}, they compute the Burns-Epstein invariant for tangent circle bundles over Riemann surfaces and  Reinhardt domain in $\C^2.$ In \cite{be2}, Burns and Epstein raise the following question: ''An interesting question is left open here about the relationship of these invariants to the K$\ddot {\text a}$hler geometry of the interior manifold, and the behavior of developing maps for CR manifolds which are locally CR equivalent to the standard sphere''. 

This paper is an attempt to answer the second part of this question.
Namely, we show that for a spherical CR homology sphere, the Burns-Epstein invariant, modulo an integer, is basically a 
"topological'' invariant. More precisely, it coincides with minus the Chern-Simons 
invariant of the holonomy representation. The main result of our is the development of a cut-and-paste method, inspired from the works of P. Kirk and E. Klassen in gauge theory (\cite{kk1,kk2}), to compute the Burns-Epstein invariant, modulo an integer, of spherical CR homology spheres. We first defined the normal forms of a flat connection near the torus boundary and then prove a result (Theorem 5.1) which expresses the change of the Chern-Simons invariants of a path of normal form flat connections on a manifold with boundary in terms of the boundary holonomy. To compute the Chern-Simons invariant on a closed manifold $M,$ we decompose it as union of manifolds with torus boundary. On each manifold with boundary, we try to connect our original connection to a connection whose Chern-Simons invariant is already known and then use Theorem 5.1 to compute its Chern-Simons invariant.  
This method has been applied successfully to compute the Chern-Simons invariants of representations into $\SUtwo, \SLC, \SUn$ (\cite{kk1,kk2,nishi}) and the Godbillon-Vey invariants of foliations (\cite{k1}).  
 
The rest of this paper is organized as follows. In the next section, we recall some preliminaries about the universal covering group $G$ of  $\U21$ and the Burns-Epstein invariant of a CR manifold. In section 3, we show that, up to an integer, the Burns-Epstein invariant of spherical CR homology sphere equals minus the Chern-Simons invariant of its holonomy representation. 
Section 4 contains technical results which allow us to define a normal form of a flat $G$ connection on a manifold with boundary. In section 5, We prove Theorem 5.1 which expresses the change of the Chern-Simons invariant of a path of flat connections in terms of the boundary holonomy. This theorem are our tool to compute the Chern-Simons invariant in section 6, where an explicit formula for the Chern-Simons invariant of a Seifert fibered homology sphere is given. As an illustration, we carry out an explicit computation of the Burns-Epstein invariants, modulo an integer, of the homology sphere $\Sigma(2,3,11).$     

\noindent
{\bf Acknowledgment.}  
The main part of this paper was written during  2005-2006, when the author is supported by a COE Postdoctoral Fellowship from  the Graduate School of Mathematical Sciences, University of Tokyo and the  JSPS's Kakenhi Grant. The author would like to express his gratitude to the Graduate School of Mathematical Sciences for its support and especially to Professor Takashi Tsuboi his guidance and encouragement during that time.

\section{ Preliminaries}    
Recall that the group $\U21$ is the matrix group consists of all the $3\times 3$ matrices $A=(a_{ij})_{i,j=1,\dots,3}$ of complex entries such that $JA^\dagger J=A^{-1},$ where  $J$ is $diag(1,1,-1)$-the diagonal matrix whose main diagonal  are $(1,1,-1)$  and $A^\dagger $ is the complex conjugate of $A.$ Note that $\U21$ acts on the open unit ball in $\C^2,$ a model for the complex hyperbolic space $H^2_{\C}$, by :
$$(z,w) \longmapsto (\frac{a_{11}z+a_{12}w+a_{13}}{a_{31}z+a_{32}w+a_{33}},\frac{a_{21}z+a_{22}w+a_{23}}{a_{31}z+a_{32}w+a_{33}} ). $$
This action is transitive and the stabilizer of the origin is isomorphic to $\Utwo \times \Uone.$ It follows that the fundamental 
group of $\U21$ is isomorphic to $\Z\oplus \Z.$ 

In the following, we will construct the universal covering group
of $\U21,$ which is denoted by $G$ for short. Let's define 

\noindent
$G:=\{(A,\theta_1,\theta_2)\in \U21\times \R\times \R\ | \quad \theta_1\equiv \arg(\det(A))\mod 2\pi, \theta_2\equiv \arg(a_{33})\mod 2\pi \}.$

Since $A\in \U21,$ we find that  $|a_{31}|^2+|a_{32}|^2-|a_{33}|^2=-1.$  Therefore $a_{33}\ne 0,$ and the definition makes sense.

The multiplication on $G$ is defined by :
$$(A,\theta_1,\theta_2)(B, \phi_1, \phi_2)= (AB, \theta_1+\phi_1, \theta_2+\phi_2+ \arg(1+\frac{a_{31}b_{13}+a_{32}b_{23}}{a_{33}b_{33}})).$$  
Here, $\arg$ is the principal argument which takes value in $(-\frac{\pi}{2},\frac{\pi}{2}).$

Using the same argument as in \cite{hansen}, we can see that the multiplication is well-defined and $G$ is indeed a covering 
group of $\U21.$ To check that $G$ is simply connected, we consider its action on the open unit ball in $\C^2$ through the
 action of   the first component which  lies in $\U21.$
It is not hard to see that the action is transitive and the stabilizer of the origin is homeomorphic to 
$\SUtwo\times\R\times\R$. This implies that homotopically, $G$ is the same as $\SUtwo$ therefore it is simply connected.
We will also identify the Lie algebra of $G$ with $\u21-$ the Lie algebra of $\U21.$

  The elements of $\U21$ can be divided into 3 types according to their action on the complex hyperbolic space 
$H^2_{\C}$(see \cite{chen}). Namely, a matrix is called {\it elliptic} if it has a fixed point in $H^2_{\C}.$ We call it {\it 
parabolic} if it has a unique fixed point in $\overline {H^2_{\C}}$ and this lies on $\partial H^2_{\C}.$ And finally, a matrix is
 called {\it loxodromic} if it has exactly two fixed points in $\overline {H^2_{\C}}$ which lie on $\partial H^2_{\C}.$  We also classify element of the universal covering group $G$ as elliptic, parabolic or loxodromic if 
its image under the  projection map $G\rightarrow \U21$  is of the corresponding types.

Let $M$ be a smooth, compact, oriented 3-manifold. A \textit{contact structure} on $M$ is a oriented 2-plane field $V=\textrm{ker}\ \alpha,$ where $\alpha$ is an 1-form  such that $\alpha\wedge d\alpha$ is nowhere zero.
A \textit{strictly pseudoconvex CR structure} on $M$ is a contact structure $V$ together with a complex structure $J$ on $V.$ Let $V\otimes\C=v\oplus\bar v,$ where $v,\bar v$ are the $i$ and $-i$ eigenspaces of $J$ respectively, then $v$ is called the \textit{holomorphic tangent bundle}. 

 A CR structure which is locally isomorphic to the standard CR structure on the unit sphere $\S^3\subset \C^2,$ is called a \textit{spherical CR structure.} A spherical CR structure is determined by a pair $(D, \rho),$ where $D: \tilde M\rightarrow \S^3$ is a local isomorphism and $\rho: \fg(M) \rightarrow \PU $ is the holonomy representation such that   
$ D\circ \gamma =\rho(\gamma) \circ D,\ \text{for all} \ \gamma\in \fg(M). $ See \cite{falbel} for more details about spherical CR structures.

We now briefly recalled the definition of the Burns-Epstein invariant. The reader is referred to \cite{be1} for more details.
Let (M,V,J) be a CR 3-manifold with trivial holomorphic tangent bundle and $\pi_Y:Y\rightarrow M$ be its CR structure bundle. Denoted by $\pi$ the Cartan connection form, that is a $\su$-valued 1-form on $Y.$ Let $\Pi=d\pi+\pi\wedge\pi$ be its curvature form. Consider a 3-form: $$TC_2(\pi):=\frac{1}{8\pi^2}Tr(\pi\wedge\Pi+\frac{1}{3}\pi\wedge\pi\wedge\pi).$$ The main theorem of Burns-Epstein \cite{be1} says that : there exists a 3-form $\tilde TC_2(\pi)$ on $M,$ which is defined up to an exact form, such that $\pi_Y^*(\tilde TC_2(\pi))= TC_2(\pi).$ Moreover, the integral $\int_M\tilde TC_2(\pi)$ is a biholomorphic invariant of the CR structure on $M.$ For a given CR 3-manifold $(M,V,J),$    this integral is simply denoted by $\mu(M)$ and called  the \textit{Burns-Epstein invariant} of $M.$

Since the Burns-Epstein invariant is only defined when the holomorphic tangent bundle is trivial, for simplicity we will restrict ourself to the case of homology spheres so that this condition is automatically fulfilled. With some modifications, the reader may extend our results here to the relative version of the Burns-Epstein on a general $3$-manifold as defined in \cite{cheng}.   

\section{Relation to  the Chern-Simons invariant}

Next, we will recall the definition of the Chern-Simons invariant of a flat connection associated to a 
representation. The reader is referred to \cite{cs} for general facts about the Chern-Simons invariant. For technical reason, we will work with the universal covering group $G.$ 
 
Let $\rho:\fg(M)\longrightarrow G$ be a representation of the fundamental group of $M.$ Consider the flat $G$ bundle 
associated to $\rho:\ E_{\rho}:=\tilde M\times_{\rho}G.$ Let $A$ be the connection form of the flat connection on
 $E_{\rho},$  then the Chern-Simons form of $A$ is defined by :
$$CS(A):=\frac{1}{8\pi^2}Tr(A\wedge dA+\frac{2}{3}A\wedge A\wedge A).$$

As we have shown that homotopically $G$ is the same as $\SUtwo,$ standard obstruction theory implies that $E_{\rho}$ is a trivial
bundle. The \textit{Chern-Simons invariant} of $\rho$ is defined by:
$$cs(\rho):=\int_Ms^*(CS(A))\\ \mod \Z,$$ 
where $s$ is a section of $E_{\rho}.$ It is not hard to see that the Chern-Simons invariant is well-defined, since 
the difference between two sections is, homologically, some multiples of the fiber and the
 Chern-Simons form when restricted to the fiber is the generator of $H^3(G;\Z)\cong\Z.$ 

We can also defined the Chern-Simons invariant for a representation $\rho$ into the group $\U21, \SU$ or $\PU$ in the same manner as long as the associated bundle $E_{\rho}$ is trivial. However we will get nothing new, since in this case we can lift $\rho$ to a representation $\tilde\rho$
into $G$ and $cs(\rho)\equiv cs(\tilde \rho).$ To show this equality, note that the connection form $A$
 is induced from the pullback of the Maure-Cartan form by the projection map $\tilde M\times G\rightarrow G$ and moreover the 
Maure-Cartan forms preserve under pullback of the covering maps of Lie groups.

The next observation shows that in the case of a spherical CR structure the Burns-Epstein invariant, modulo an integer,  is minus 
the Chern-Simons invariant of the holonomy representation. So in this case the Burns-Epstein invariant, modulo  an integer, only depends on
 the holonomy representation.
\begin{prop}
Let $(M,V,J)$ be a spherical CR homology sphere whose holonomy representation is $\rho:\fg(M)\longrightarrow \PU$ then
$$\mu(M)\equiv -cs(\rho)\ \ \mod \Z.$$
\end{prop}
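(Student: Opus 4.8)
The plan is to reduce the statement to the single observation that, for a spherical CR structure, the Cartan connection $\pi$ is \emph{flat}. Indeed, a CR structure is locally equivalent to the standard one on $\S^3$ precisely when the Cartan (Chern--Moser) curvature vanishes, so sphericity gives $\Pi = d\pi + \pi\wedge\pi \equiv 0$ on $Y$, i.e. $d\pi = -\pi\wedge\pi$. Substituting this into the two transgression forms, the curvature term in $TC_2$ drops out and $d\pi$ is eliminated from $CS$, leaving
$$TC_2(\pi) = \frac{1}{24\pi^2}\mathrm{Tr}(\pi\wedge\pi\wedge\pi), \qquad CS(\pi) = -\frac{1}{24\pi^2}\mathrm{Tr}(\pi\wedge\pi\wedge\pi),$$
so that $TC_2(\pi) = -CS(\pi)$ as $3$-forms on $Y$. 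This pointwise identity is the heart of the matter.

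Next I would identify the flat Cartan connection with the flat connection attached to the holonomy. The developing pair $(D,\rho)$ presents $M$ as a $(\PU,\S^3)$-manifold; the $\rho$-equivariant developing map $D$ descends to a section of the associated bundle $E_{\rho}/P = \tilde M\times_{\rho}\S^3$, and the preimage of this section under $E_{\rho}\to E_{\rho}/P$ realizes the CR structure bundle $Y$ as a reduction of the flat bundle $E_{\rho}$ to the stabilizer $P$ of a boundary point. Under this identification the Cartan connection $\pi$ is exactly the restriction to $Y$ of the flat connection $A$, whence $CS(\pi) = CS(A)|_Y$. To make the Lie algebras match I would lift $\rho$ to $\widetilde{\SU}$ rather than to $G=\Univ$, so that $A$ is genuinely $\su$-valued and literally restricts to $\pi$; by the remark of Section 3 this does not alter $cs(\rho)$ modulo $\Z$.

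Finally I would push everything down to $M$. Since $\pi_Y^*\tilde{TC_2}(\pi) = TC_2(\pi) = -CS(A)|_Y$, pulling back along a section $\sigma$ of $Y$ (regarded, via the inclusion $Y\hookrightarrow E_{\rho}$, as a section of $E_{\rho}$) gives $\tilde{TC_2}(\pi) = -\sigma^*CS(A)$ up to an exact form, hence $\mu(M) = -\int_M \sigma^* CS(A)$; comparing $\sigma$ with the section used to define $cs(\rho)$, the two differ homologically by multiples of the fiber, on which $CS(A)$ integrates to integers, and the congruence $\mu(M)\equiv -cs(\rho)\bmod\Z$ follows. The main obstacle is precisely this last bundle-theoretic bookkeeping: establishing the reduction $Y\subset E_{\rho}$ cleanly, ensuring the descent is compatible with Burns--Epstein's construction of $\tilde{TC_2}$ without assuming that $Y$ is trivial, and checking that the ambiguities coming from the choice of lift of $\rho$, from the $\su$-versus-$\u21$ discrepancy, and from the choice of section all take integer values and are thus absorbed by the reduction modulo $\Z$.
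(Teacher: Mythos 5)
Your proposal is correct and follows essentially the same route as the paper: the heart of both arguments is the pointwise identity $TC_2(\pi)=-CS(\pi)$ forced by flatness of the Cartan connection, combined with identifying that flat Cartan geometry with the flat bundle $E_\rho$ coming from the holonomy. Your presentation of $Y$ as a $P$-reduction of $E_\rho$ cut out by the developing section is just a repackaging of the paper's use of the identification $Y_{\S^3}\cong\SU$ (with the Maurer--Cartan form as Cartan connection) and of $\DD\circ s$ as a section of $E_\rho=\tilde M\times_\rho\SU$, and the closing comparison of sections modulo multiples of the fiber is exactly the paper's final step.
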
         
\begin{proof}
Let $D:\tilde M\longrightarrow S^3$  be the developing map for the CR structure on $M.$ Denoted  
by $Y_{\tilde M}$ and $Y_{\S^3}$ the CR structure bundles on $\tilde M$ and $\S^3$ respectively.
 Consider the commutative diagram below
$$
\begin{diagram}
\node{Y_{\tilde M}}\arrow{e,t}{\DD}\arrow{s,l}{\pi_{\tilde M}}
\node{Y_{\S^3}}\arrow{s,r}{\pi_{\S^3}}\\
\node{\tilde M}\arrow{e,t}{D}
\node{\S^3}
\end{diagram}
$$
Let $A$ be the Cartan connection form on $Y_{\S^3},$ then $\DD^*(A)$ is the Cartan connection form on 
$Y_{\tilde M}.$ As $M$ is a homology sphere, the CR structure bundle of $M$ is trivial, that is, there exist an equivariant 
section $s:\tilde M\longrightarrow Y_{\tilde M}.$  Then the equivariant 3-form $s^*(TC_2(\DD^*(A)))$ on $\tilde M$ will 
induce a 3-form on $M$ which we denote by the same notation. By using the vanishing of the curvature, we see that 
$$s^*(TC_2(\DD^*(A)))=s^*(-CS(\DD^*(A))) =-CS((\DD\circ s)^*(A)).$$

On the other hand, the CR structure bundle on the standard sphere can be identified with the Lie group $\SU$ (see \cite{j}) and 
the Cartan connection $A$ is the Maure-Cartan form on $\SU.$ Under this identification, the equivariant map
$\DD\circ s$ can be consider as a section of the bundle $E_{\rho}=\tilde M\times_{\rho}\SU.$ It now follows from the 
definition of the Burns-Epstein invariant that:
$$\mu(M)\equiv \int_M s^*(TC_2(\DD^*(A)))\equiv \int_M -CS((\DD\circ s)^*(A)) \equiv -cs(\rho) \mod \Z.$$
\end{proof}
According to \cite{j}, over the standard $3$-sphere the form $\tilde TC_2(A)$ coincides with  $-\frac{1}{2\pi^2}dVol,$ where 
$dVol$ is the volume form on the unit $3$-sphere. So it
 follows from the proof above that:
$$\mu(M)= \int_M (\DD\circ s)^*(TC_2(A))=\int_M (\DD\circ s)^*(\pi_{\S^3})^*(-\frac{1}{2\pi^2}dVol)$$
$$ =-\frac{1}{2\pi^2}\int_M D^*(dVol).\hskip4.6cm$$
 If $\kappa: \tilde M \rightarrow \S^3$ is an equivariant map, considered as a section of the associated bundle $\tilde M\times_\rho \S^3,$ then 
$\mu(M)\equiv -\int_M \kappa^*(\frac{1}{2\pi^2}dVol) \  \mod \Z.$ The reason is that two sections $D$ and $\kappa$ are homologically 
differed by some multiples of the fiber $\S^3$ and the integral of the form $\frac{1}{2\pi^2}dVol$ over the unit $3$-sphere is $1.$  We get the following corollary: 
 
\begin{corollary}Let $M$ be a spherical CR homology sphere whose holonomy representation 
$\rho:\fg(M)\longrightarrow \PU $ is reducible then $\mu(M)\equiv 0 \mod \Z.$ 
\end{corollary}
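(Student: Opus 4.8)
The plan is to exploit the volume formula established immediately above the statement: for \emph{any} equivariant map $\kappa:\tilde M\to\S^3$, regarded as a section of $\tilde M\times_\rho\S^3$, one has $\mu(M)\equiv-\int_M\kappa^*(\tfrac{1}{2\pi^2}dVol)\bmod\Z$. Because $dVol$ is a $3$-form, the pullback $\kappa^*(dVol)$ vanishes identically as soon as the image of $\kappa$ lies in a submanifold of $\S^3$ of dimension at most two. Thus it suffices to produce a single equivariant map whose image misses a dimension, and reducibility of $\rho$ is precisely what will let me shrink the target.

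First I would read off from reducibility a proper $\rho$-invariant subset of $\S^3=\partial H^2_\C$. Lifting $\rho$ to $\SU$, reducibility gives a proper nonzero $\C$-subspace $W\subset\C^{2,1}$ invariant under the linear action. Restricting the Hermitian form to $W$: if it is degenerate, the isotropic radical is an invariant null line, i.e.\ a $\rho$-fixed point $p\in\S^3$; if it is indefinite, then $W$ (or its orthogonal complement) is the direction of an invariant complex geodesic $H^1_\C\subset H^2_\C$, whose ideal boundary is an invariant circle $C\cong\S^1\subset\S^3$. In either case I obtain a proper $\rho$-invariant closed submanifold $N\subsetneq\S^3$ with $\dim N\le 1$. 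Here the homology-sphere hypothesis is used to clear away abelian and orientation data: every homomorphism $\fg(M)\to\Uone$ and every homomorphism $\fg(M)\to\{\pm1\}$ is trivial, since it factors through $H_1(M;\Z)=0$.

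Next I would construct the equivariant map into $N$. When $N=\{p\}$, the constant map $\kappa\equiv p$ is equivariant and $\kappa^*(dVol)=0$ at once. When $N=C\cong\S^1$, I seek an equivariant section of the flat circle bundle $\tilde M\times_\rho C\to M$. With fiber $\S^1=K(\Z,1)$, the obstructions to a section live in $H^{k+1}(M;\pi_k(\S^1))$, that is, only in $H^2(M;\Z)$ with coefficients twisted by the action on $\pi_1(C)=\Z$; by the previous paragraph that action is orientation-preserving, so the coefficients are untwisted, and $H^2(M;\Z)\cong H_1(M;\Z)=0$ by Poincar\'e duality. Hence a section $\kappa$ exists, its image is contained in the one-dimensional set $C$, and $\kappa^*(dVol)=0$. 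In both cases the volume formula yields $\mu(M)\equiv0\bmod\Z$.

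The step I expect to be the genuine obstacle is the existence of this equivariant section into the invariant circle: it is exactly here that the homology-sphere hypothesis is indispensable, through the vanishing of $H^2(M;\Z)$ together with the untwisting of the coefficients, and it explains why the corollary is phrased for homology spheres rather than general CR $3$-manifolds. A secondary point requiring care is the case analysis above: one must ensure that the invariant subspace actually meets the light cone, so that it produces a lower-dimensional invariant subset of the \emph{boundary} sphere rather than only a fixed point in the interior. Reading reducibility as the preservation of a boundary point or of a geodesic subspace is what guarantees this, after which the argument applies verbatim.
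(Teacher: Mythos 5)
Your proposal is correct in the cases it treats, and it is strictly more elaborate than the paper's own argument, which consists of two sentences: reducibility is read as producing a common fixed point $*\in\S^3$ for the image of $\rho$, the constant map is then an equivariant section of $\tilde M\times_\rho\S^3$, and the volume formula displayed just before the corollary gives $\mu(M)\equiv 0 \mod \Z$. Your degenerate case (invariant null line) is exactly this argument. What you add is the indefinite case: an invariant complex geodesic with ideal boundary circle $C$, an equivariant map $\tilde M\to C$ obtained by obstruction theory, and the dimension count killing $\kappa^*(dVol)$. That extra case is genuinely needed under the linear reading of reducibility --- for instance a representation preserving a complex geodesic and acting on it as a cocompact Fuchsian group is linearly reducible but has no fixed point anywhere in $\overline{H^2_{\C}}$ --- so your proof covers strictly more than the paper's, at the cost of the obstruction-theoretic input (which is sound: the coefficients are untwisted because elements of $\PU$ preserving a complex geodesic restrict to holomorphic, hence orientation-preserving, automorphisms of it, and $H^2(M;\Z)\cong H_1(M;\Z)=0$ kills the only obstruction since $\S^1$ is aspherical).

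The one real soft spot, which you flag but then dispose of by fiat, is the definite case: an invariant negative line, i.e.\ a common fixed point in the interior of $H^2_{\C}$. Neither your argument nor the paper's covers it, and no argument could: if $\Gamma\subset\SUtwo$ is the binary icosahedral group, the Poincar\'e homology sphere $\S^3/\Gamma$ carries the descended standard spherical CR structure, whose holonomy $\fg\to\Gamma\subset\Utwo\subset\PU$ fixes the origin and is reducible in the linear sense; yet by Proposition 3.1 its invariant $\mu\equiv -cs(\rho)$ is minus the $\SUtwo$ Chern--Simons invariant of the irreducible inclusion $\Gamma\subset\SUtwo$, which is a nonzero rational mod $\Z$. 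So the corollary is only true under a restricted reading of ``reducible'' (existence of an invariant point or circle in the boundary sphere $\S^3$, rather than an invariant subspace of $\C^3$), which is what the paper implicitly adopts and what your last paragraph gestures at; that reading should be stated as the definition being used, not presented as something that reducibility ``guarantees''.
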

\begin{proof}As the holonomy representation $\rho$ is reducible, we can find a common fixed point $*\in \S^3$ for all elements in its image.  So we can take the constant map as a section of the associated bundle $\tilde M\times_\rho \S^3$ and the corollary follows. 
\end{proof}

\section{Normal forms of flat connections near the boundary torus}

On a manifold with boundary, the integral of the Chern-Simons form may depend on the way we choose the section $s.$ In order to define the Chern-Simons invariant on the manifold with boundary  we will firstly define explicit normal forms of flat connections near the boundary. We then show that every flat connection can be gauge transformed into a normal form. This will be done by finding explicit form, near the boundary, of the developing map associated to the flat connection. 

Let $X$ be a 3-manifold whose boundary $\partial X$ is a torus $T.$ We fixed a pair of meridian and longitude $\mu,\lambda$ on
 $T.$ Choose a coordinate $(e^{2\pi ix},e^{2\pi iy})$ on $T$ such that the corresponding map:
\begin{eqnarray*} 
\R^2 & \longrightarrow & T \\
(x,y) &\longmapsto &(e^{2\pi ix},e^{2\pi iy})
\end{eqnarray*}
sends the horizontal line to $\mu$ and the vertical  line $\lambda.$  
Suppose that $T\times [0,1]$ is the
collar neighborhood of $T$ in $X$ and $\{dx,dy,dr\} $ is an oriented basis of 1-forms
on $X$ near $T.$ Here $r$ is the coordinate on $[0,1]$ such that $T\times \{1\}=\partial X$ and we
orient $T$ by the ''outward normal last" convention.

Let $A$ be a flat connection form on a principal $G$-bundle over $X$ with holonomy $\rho.$  As the bundle is trivial, from now 
on we will consider $A$ as an $\u21$-valued $1$-form on $X.$ Recall that the developing map 
of $A$ is a map $D_A: \tilde X \longrightarrow G$ such that $D_A(\alpha \circ \tilde x)=\rho(\alpha)\circ D_A(\tilde x)$ for 
all $\alpha \in \fg(X)$ and  $\tilde x \in \tilde X.$ 
   
 We will follow the scheme in \cite{k1} and define the normal form for a flat $G$ connection on $X$ by dividing into 
several cases according to the type of the boundary holonomies.

\noindent (I) Elliptic: suppose that the boundary holonomies $\rho(\mu)$ and $\rho(\lambda)$ are elliptic, then by conjugation we may 
assume that: 

$\rho(\mu)=(A, 2\pi(\alpha_1+\alpha_2+\alpha_3), 2\pi\alpha_3)$ and $\rho(\lambda)=(B, 2\pi(\beta_1+\beta_2+\beta_3), 2\pi\beta_3).$ Where 

$A=\begin{pmatrix}
 e^{2\pi i\alpha_1}& 0& 0\\
0& e^{2\pi i\alpha_2}& 0 \\
0& 0 & e^{2\pi i\alpha_3}\end{pmatrix}, \ \ $ $B=\begin{pmatrix} 
 e^{2\pi i\beta_1}& 0& 0\\
0& e^{2\pi i\beta_2}& 0 \\
0& 0 & e^{2\pi i\beta_3}
\end{pmatrix}$ and
 $\alpha_i$ and $\beta_i$ are real numbers.  

\noindent 
We can see that near the boundary the developing map is given by: 
$ D: \R^2\times [0, 1]\longrightarrow G$ such that $D(x,y,r)=(M, 2\pi(\alpha_1x+\alpha_2x+\alpha_3x+\beta_1y+\beta_2y+\beta_3y), 2\pi(\alpha_3x+\beta_3y))$
where $$ M= \begin{pmatrix} 
 e^{2\pi i(\alpha_1x+\beta_1y)}& 0& 0\\
0& e^{2\pi i(\alpha_2x+\beta_2y)}& 0 \\
0& 0 & e^{2\pi i(\alpha_3x+\beta_3y)}
\end{pmatrix}.$$ 
It follows that near $\partial X=T,$ we can gauge transform the flat connection $A$ to the form:
$$ D^{-1}dD= \begin{pmatrix} 
 2\pi i(\alpha_1dx+\beta_1dy)& 0& 0\\
0& 2\pi i(\alpha_2dx+\beta_2dy)& 0 \\
0& 0 & 2\pi i(\alpha_3dx+ \beta_3dy)
\end{pmatrix}.$$
\noindent (II) Loxodromy: It follows from \cite{chen} Lemma 3.2.2 that in this case the boundary holonomy can be conjugated to the form 
$\rho(\mu)=(A, 2\pi\theta_1+4\pi\theta_2, 2\pi\theta_2)$ and $\rho(\lambda)=(B, 2\pi\tau_1+4\pi\tau_2, 2\pi\tau_2).$ Where 
$$A=\begin{pmatrix} 
 e^{2\pi i\theta_1}& 0& 0\\
0& e^{2\pi i\theta_2}\cosh u& e^{2\pi i\theta_2}\sinh u \\
0& e^{2\pi i\theta_2}\sinh u & e^{2\pi i\theta_2}\cosh u
\end{pmatrix},$$
$$B=\begin{pmatrix} 
 e^{2\pi i\tau_1}& 0& 0\\
0& e^{2\pi i\tau_2}\cosh v& e^{2\pi i\tau_2}\sinh v \\
0& e^{2\pi i\tau_2}\sinh v & e^{2\pi i\tau_2}\cosh v
\end{pmatrix}$$
and  $\theta_i, \tau_i, u, v$ are real numbers.
The developing map near the boundary is given by $$D(x,y,r)=(M, 2\pi(\theta_1x+\tau_1y) +4\pi(\theta_2x+\tau_2y),2\pi(\theta_2x+\tau_2y))$$ with 
$$ M= \begin{pmatrix} 
 e^{2\pi i(\theta_1x+\tau_1y)}& 0& 0\\
0& e^{2\pi i(\theta_2x+\tau_2y)}\cosh(ux+vy)& e^{2\pi i(\theta_2x+\tau_2y)}\sinh(ux+vy) \\
0& e^{2\pi i(\theta_2x+\tau_2y)}\sinh(ux+vy) & e^{2\pi i(\theta_2x+\tau_2y)}\cosh(ux+vy)
\end{pmatrix}.$$ 
So near the boundary the connection has the form
$$D^{-1}dD= \begin{pmatrix} 
2\pi i(\theta_1 dx+\tau_1 dy) & 0& 0\\
0& 2\pi i(\theta_2 dx+\tau_2 dy)& udx+vdy  \\
0& udx+vdy & 2\pi i(\theta_2 dx+\tau_2 dy)
\end{pmatrix} $$

\noindent (III) Parabolic: According to  \cite{chen} Theorem 3.4.1
 a parabolic element $g \in \U21 $ has a unique decomposition $g=pe=ep,$ where $p$ is unipotent and $e$ is elliptic.
 Furthermore, the classification of parabolic elements of $\U21$ can be divided into two cases corresponding to whether the minimal polynomial of the unipotent  part $p$ is $(x-1)^3$ or $(x-1)^2.$ 

If the minimal polynomial of  $p$ is $(x-1)^3$ then it can be conjugate to the form:
$ \begin{pmatrix} 
 1-s& \bar a & s\\
-a & 1& a \\
-s& \bar a & 1+s
\end{pmatrix}e^{2\pi i\alpha}  $
, where $\Re(s)= \frac{|a|^2}{2}.$ Note that by further conjugation by an appropriate diagonal matrix, we may assume 
that $a$ is real. If $g$ has the unipotent  part $p$ with the minimal polynomial  $(x-1)^2$ 
then it can be conjugate to the form: 
$$ \begin{pmatrix}
 e^{2\pi i\theta_1}(1-ip)& 0& ipe^{2\pi i\theta_1}\\
0& e^{2\pi i\theta_2}& 0 \\
-ipe^{2\pi i\theta_1}& 0 & e^{2\pi i\theta_1}(1+ip)\end{pmatrix}.$$
Therefore, in the following we will divide into two cases.

\noindent Case 1: By conjugation, we may assume that,  $$\rho(\mu)=(A, 6\pi\alpha, \arctan(\frac{p}{1+a^2/2})+2\pi\alpha),$$ 
$$\text{where}\ A=\begin{pmatrix} 
 1-\frac{a^2}{2}-ip& a & \frac{a^2}{2}+ip\\
-a & 1& a \\
-\frac{a^2}{2}-ip& a & 1+\frac{a^2}{2}+ip
\end{pmatrix}e^{2\pi i\alpha}.$$ 
As $\rho(\lambda)$ is of the same type and commutes with $\rho(\mu),$ it is not hard to check 
that   $\rho(\lambda)$ must have the form  
$$\rho(\lambda)=(B, 6\pi\beta, \arctan(\frac{q}{1+b^2/2})+2\pi\beta),$$ 
$$\text{where}\ B=\begin{pmatrix} 
 1-\frac{b^2}{2}-iq& b & \frac{b^2}{2}+iq\\
-b & 1& b \\
-\frac{b^2}{2}-iq& b & 1+\frac{b^2}{2}+iq
\end{pmatrix}e^{2\pi i\beta}.$$
 Note that  $a,b,p,q, \alpha, \beta$ are real numbers.
  
We then can choose the developing map to be 

$$D(x,y,r)=(M, 6\pi(\alpha x+\beta y), \arctan(\frac{px+qy}{1+(ax+by)^2/2})+2\pi(\alpha x+\beta y) ),$$
where 
 $$ M=\begin{pmatrix} 
 1-\frac{(ax+by)^2}{2}-& (ax+by)& \frac{(ax+by)^2}{2}+\\
i(px+qy)& &i(px+qy)\\ 
& &\\
-(ax+by)&1 & (ax+by) \\
& &\\
-\frac{(ax+by)^2}{2}&(ax+by)& 1+\frac{(ax+by)^2}{2}+\\
-i(px+qy)&& i(px+qy)
\end{pmatrix}e^{2\pi i(\alpha x+\beta y)}.$$
With straightforward computations, we deduce that the connection form $D^{-1}dD$ near the boundary is
$$ \begin{pmatrix} 
 -i(pdx+qdy)& (adx+bdy)& i(pdx+qdy)\\
2\pi i(\alpha dx +\beta dy)& &\\
 & &\\
-(adx+bdy)& 2\pi i(\alpha dx +\beta dy)& (adx+bdy) \\
& &\\
-i(pdx+qdy)& (adx+bdy) & i(pdx+qdy)+ \\
& &2\pi i(\alpha dx +\beta dy)
\end{pmatrix}.$$

\noindent Case 2: After conjugation, we may assume that  $$\rho(\mu)=(A, 4\pi\theta_1+2\pi\theta_2, 2\pi\theta_1+\arctan(p))$$   $$\text{and}\ \rho(\lambda)=(B, 4\pi\tau_1+2\pi\tau_2, 2\pi\tau_1+\arctan(q)).$$  

$$ \text{Where}\ A=\begin{pmatrix}
 e^{2\pi i\theta_1}(1-ip)& 0& ipe^{2\pi i\theta_1}\\
0& e^{2\pi i\theta_2}& 0 \\
-ipe^{2\pi i\theta_1}& 0 & e^{2\pi i\theta_1}(1+ip)\end{pmatrix}, $$

$$  B=\begin{pmatrix}
 e^{2\pi i\tau_1}(1-iq)& 0& iqe^{2\pi i\tau_1}\\
0& e^{2\pi i\tau_2}& 0 \\
-iqe^{2\pi i\tau_1}& 0 & e^{2\pi i\tau_1}(1+iq)\end{pmatrix},$$ and $\theta_i, \tau_i, p, q$ are real numbers. So the developing map is of the form 

$$D(x,y,r)=(M, 4\pi(\theta_1 x+\tau_1 y)+ 2\pi(\theta_2 x+\tau_2 y), 2\pi (\theta_1 x+\tau_1 y)+\arctan(px+qy)),$$
where
$$M= \begin{pmatrix}
 e^{2\pi i(\theta_1 x+\tau_1 y)}- & 0& i(px+qy)e^{2\pi i(\theta_1 x+\tau_1 y)}\\
  i(px+qy)e^{2\pi i(\theta_1 x+\tau_1 y)}&&\\
  &&\\
0& e^{2\pi i(\theta_2 x+\tau_2 y)}& 0 \\
&&\\
-i(px+qy)e^{2\pi i(\theta_1 x+\tau_1 y)}& 0 & e^{2\pi i(\theta_1 x+\tau_1 y)}+\\
&& i(px+qy)e^{2\pi i(\theta_1 x+\tau_1 y)} \end{pmatrix}
.$$
After some lengthy computations, we find the connection form $D^{-1}dD$ to be:
 $$ \begin{pmatrix}
  2\pi i(\theta_1 dx+\tau_1 dy)-  & 0&  i(pdx+qdy)\\
  i(pdx+qdy)&&\\
  &&\\
0& 2\pi i(\theta_2 dx+\tau_2 dy)& 0 \\
&&\\
- i(pdx+qdy)& 0 & 2\pi i(\theta_1 dx+\tau_1 dy)+ \\
&& i(pdx+qdy)
\end{pmatrix}.$$
\begin{define} We say that a flat $G$ connection $1$-form on a manifold with torus boundary $X$ is in normal form if, near the boundary, it has one of the forms as in the cases (I), (II) and (III) above.
\end{define}
Note that one connection  may have different normal forms as we can add  integers to the exponential parameters in the holonomy matrices.

For the computation of the Burns-Epstein invariant, we need to bring connections to the normal forms. The following proposition shows that we can always do so.
 \begin{prop} a) Let $A$ be a flat $G$ connection $1$-form on $X.$  Then there exists a gauge transformation which brings $A$ into a normal form.   
\label{nf}
\noindent
b) Let $A_t$ be a path of flat $G$ connection $1$-forms on $X.$ Then there exists a path of gauge transformations $g_t$ such that  $g_t\cdot A_t$ is in normal form for all $t$.
\end{prop}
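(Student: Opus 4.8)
The plan is to read off the gauge transformation directly from the developing map $D_A\colon \tilde X\vao G$ of $A$, which exists because the bundle is trivial and $A$ is flat. Since $\mu$ and $\lambda$ commute in $\fg(T)$, the boundary holonomies $\rho(\mu),\rho(\lambda)$ form a commuting pair in $G$; and since $A$ determines $\rho$ only up to conjugacy (any two developing maps of a fixed flat $A$ differ by left translation by a constant, which conjugates $\rho$ but leaves $A^{-1}dA$-type formula, hence $A$, unchanged), I am free to replace $\rho$ by any conjugate before matching.

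The first—and main—step is purely algebraic. Projecting to $\U21$ and using the trichotomy together with the classification results of \cite{chen} (Lemma 3.2.2 and Theorem 3.4.1), I would show that the commuting pair $(\rho(\mu),\rho(\lambda))$ can be simultaneously conjugated into exactly one of the standard templates of cases (I), (II), (III). Commutativity is what makes this work: once $\rho(\mu)$ is put in standard form, its centralizer in $\U21$ is constrained—the diagonal torus in the elliptic case, the centralizer of a loxodromic block in the loxodromic case, and the stabilizer of the boundary fixed point intersected with the unipotent centralizer in the parabolic case—and this forces $\rho(\lambda)$ to lie in the matching form displayed in the corresponding case (this is the ``it is not hard to check'' point for $\rho(\lambda)$). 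Lifting the conjugating element to $G$ and absorbing it into the choice of $D_A$, I may henceforth assume $\rho(\mu),\rho(\lambda)$ are in standard form.

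With the holonomy standardized, the explicit formulas of cases (I)--(III) define a smooth model map $D\colon \R^2\times[0,1]\vao G$, pulled back from $\tilde T=\R^2$; one verifies that $D$ is equivariant with the prescribed boundary holonomy, so $D$ is itself a developing map, on the collar, for the normal-form connection $A_0=D^{-1}dD$. I would then set $g:=D_A^{-1}D$ on the collar $T\times[0,1]$. Since $D_A$ and $D$ carry the same $\rho$, the map $g$ is $\fg(T)$-invariant and descends to a smooth gauge transformation of the collar, and a one-line computation gives
$$g\cdot A=(D_A\,g)^{-1}d(D_A\,g)=D^{-1}dD=A_0,$$
so $A$ is carried to normal form near $\partial X$. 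To promote $g$ to a global gauge transformation it suffices to extend it over $X$ agreeing with $g$ near the boundary; because $G\simeq\SUtwo\simeq\S^3$ has $\pi_1(G)=\pi_2(G)=0$, every map $T^2\vao G$ is null-homotopic, so I can taper $g$ to the identity across an inner sub-collar through such a null-homotopy and extend by $\id$ on the remainder of $X$. Flatness is preserved under gauge transformations, so the interior behaviour is irrelevant and the resulting connection is in normal form. This proves a).

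For b) I would run the same construction in a family over $t$: the holonomies $\rho_t(\mu),\rho_t(\lambda)$ vary continuously, the conjugating elements normalizing them can be selected continuously, the template parameters $\alpha_i(t),\beta_i(t),u(t),v(t),p(t),q(t),\dots$ then depend continuously on $t$, and $g_t=D_{A_t}^{-1}D_t$ together with its tapered global extension can be carried out continuously. The delicate point—and the one I expect to be the real obstacle—is exactly this continuity of the algebraic normalization: choosing the conjugating element continuously requires the joint type of $(\rho_t(\mu),\rho_t(\lambda))$ to be constant along the path (and the relevant eigendata to vary without collision), since a change of type forces a change of normal-form template. Within a fixed type the selection is continuous by direct inspection of the explicit formulas, so I would either restrict to paths of constant boundary type or treat the finitely many transition parameters separately.
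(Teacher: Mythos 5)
Your proposal is correct and takes essentially the same route as the paper: the paper's proof just invokes the boundary normal-form constructions already carried out in Section 4 (the explicit developing-map computations in cases (I)--(III)) and then extends the resulting collar gauge transformation over all of $X$ by the standard obstruction-theory argument of Kirk--Klassen, which is exactly your null-homotopy/tapering step using $\pi_1(G)=\pi_2(G)=0$. Your caveat in part b) about the boundary holonomy type staying constant along the path is a genuine subtlety that the paper leaves implicit (its later applications only use constant-type, elliptic paths), so your version is, if anything, more careful than the original.
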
         
\begin{proof}
We have shown above that locally near the torus boundary, we can always gauge transform the flat connection into a normal form. By using standard obstruction theory argument as in \cite{kk2} Proposition 2.3 we can extend the gauge transformation to all of $X$ and prove the proposition. 
\end{proof}
The next lemma tells us that the integral of the Chern-Simons form of  a normal form flat connection on a manifold with boundary is gauge invariant.
\begin{lemma}  
Let $A$ and $B$ be two normal form flat connections on a manifold with torus boundary $X.$ Suppose that:

1. $A$ and $B$ are gauge equivalent.

2. $A$ and $B$ are in normal form and equal near the boundary.

Then $\int_XCS(A)\equiv \int_XCS(B) \mod \Z.$
\end{lemma}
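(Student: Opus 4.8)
The plan is to show that the difference $\int_X CS(A) - \int_X CS(B)$ is an integer by exploiting the standard transformation law for the Chern-Simons form under a gauge transformation. Since $A$ and $B$ are gauge equivalent by hypothesis, write $B = g\cdot A = g^{-1}Ag + g^{-1}dg$ for some gauge transformation $g: X \longrightarrow G$. The well-known variation formula gives
$$CS(g\cdot A) - CS(A) = -\frac{1}{24\pi^2}Tr((g^{-1}dg)^3) - \frac{1}{8\pi^2}d\,Tr(dg\,g^{-1}\wedge A).$$
First I would integrate this identity over $X$. The exact term contributes only a boundary integral $\int_{\partial X}$ by Stokes' theorem, and the remaining term is $-\frac{1}{24\pi^2}\int_X Tr((g^{-1}dg)^3)$, which is the pullback $g^*\omega$ of the bi-invariant generator $\omega$ of $H^3(G;\R)$ normalized so that its integral over the generating $3$-cycle is an integer.

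The key simplification comes from hypothesis 2: since $A$ and $B$ coincide near $\partial X$, the gauge transformation $g$ must take values in the stabilizer of $A$ along the boundary, and in any case the boundary term $\int_{\partial X} Tr(dg\,g^{-1}\wedge A)$ vanishes because $g$ is locally constant (or central) there, so $dg\,g^{-1}$ restricts trivially on $\partial X$. Hence the only surviving contribution is the bulk degree-three term. Next I would argue that $-\frac{1}{24\pi^2}\int_X Tr((g^{-1}dg)^3)$ is an integer: because $g$ is the identity (or constant) near $\partial X$, it descends to a well-defined map from the closed-up space $X/\partial X$, or equivalently defines a relative homology class, so that $g_*[X,\partial X]$ is an integral multiple of the fundamental class of $G$. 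Since the normalized generator of $H^3(G;\Z)$ integrates to $1$ over that generator—exactly as invoked in the definition of $cs(\rho)$ earlier in the paper, where the Chern-Simons form restricted to the fiber is stated to be the generator of $H^3(G;\Z)\cong\Z$—the integral lands in $\Z$. This yields $\int_X CS(B) \equiv \int_X CS(A) \mod \Z$.

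I expect the main obstacle to be the boundary analysis. One must verify carefully that the agreement of $A$ and $B$ near $\partial X$ forces the boundary integral to vanish rather than merely to be an integer; this requires knowing precisely how the normal form constrains $g$ along the torus, in particular that $g$ can be taken to restrict to a map into the centralizer of the boundary holonomy. The homotopical identification $G \simeq \SUtwo$ established in the Preliminaries is what guarantees $H^3(G;\Z)\cong\Z$ with no lower-dimensional obstructions, so the integrality of the degree-three bulk term is clean once the relative class $g_*[X,\partial X]$ is well defined. The routine but delicate part is bookkeeping the normalization constants $\frac{1}{8\pi^2}$ and $\frac{1}{24\pi^2}$ so that the generator of $H^3(G;\Z)$ truly integrates to $1$; I would cite the analogous computation in \cite{kk2} rather than redo it in full.
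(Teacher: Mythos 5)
Your overall route---writing $B=g\cdot A$, applying the gauge-transformation formula for the Chern--Simons form, and splitting the difference into a Stokes boundary term plus a Wess--Zumino bulk term---is exactly the route of the proof the paper defers to: the paper gives no argument of its own for this lemma, saying only that the proof is ``similar to the one of \cite{kk2} Theorem 2.4.'' So your strategy is the right one; the problem is that the step you dispose of in one phrase is precisely where all the content of that cited proof lives.

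The genuine gap is the assertion that, since $A$ and $B$ agree near $\partial X$, the gauge transformation $g$ is ``locally constant (or central)'' there, so that $dg\,g^{-1}$ restricts trivially to $\partial X$ and $g$ descends to $X/\partial X$. The hypothesis only forces $g$ to \emph{stabilize} $A$ on the collar, and the stabilizer of a normal-form flat connection on $T\times[0,1]$ is $\{D^{-1}cD\}$, where $D$ is the developing map of $A$ and $c$ is a constant element of the centralizer $Z$ of the boundary holonomy in $G$. Such a $g$ is constant exactly when $c$ commutes with the image of $D$---true for an elliptic normal form with pairwise distinct diagonal entries, but false in general: when eigenvalues coincide (which occurs in the paper's own applications, e.g.\ $\rho(x_1)\sim \mathrm{diag}(1,-1,-1)$ for $\Sigma(2,3,11)$, and for the trivial boundary holonomy $\rho(\mu)=1$ arising in the proof of Theorem 6.1), and in the parabolic cases where $Z$ contains a nonabelian unipotent group, $g$ is genuinely nonconstant along $T$ and $dg\neq 0$ there. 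In that situation neither the vanishing of $\int_{\partial X}Tr(dg\,g^{-1}\wedge A)$ nor the integrality of $-\frac{1}{24\pi^{2}}\int_{X}Tr((g^{-1}dg)^{3})$ follows from your argument (the latter needs $g$ constant near $\partial X$ to define a relative class), and only their \emph{sum} is controlled. The missing idea, which is the essential step in Kirk--Klassen's proof, is a homotopy through the stabilizer: since $Z$ is connected, choose a path $c_t$ from $c$ to $1$, extend the homotopy $g_t=D^{-1}c_tD$ of $g|_{T\times[0,1]}$ to a homotopy $G_t$ of $g$ over all of $X$; each $G_t\cdot A$ is flat and equals $A$ near $\partial X$, so $\int_X CS(G_t\cdot A)$ is independent of $t$ (flatness kills the bulk variation, constancy near the boundary kills the boundary variation); then run your argument verbatim on $G_1$, which \emph{is} identically $1$ near $\partial X$, where both the vanishing of the boundary term and the degree argument are correct. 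Without this step---or an explicit case-by-case verification covering the degenerate elliptic, loxodromic and parabolic normal forms---the proof is incomplete at exactly the point you flagged as ``routine but delicate'': it is delicate, and it is not routine.
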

\begin{proof} The proof is similar to the one of \cite{kk2} Theorem 2.4. So we will left it to the reader as an exercise.
\end{proof}
So we may define the \textit{Chern-Simons invariant of a normal form flat connection} $A$ by:
$$cs(A):=\int_X CS(A) \mod \Z.$$ 
\section{Variations of the Chern-Simons invariants}

In this section, we will prove the main technical tool for computing the Chern-Simons invariant which is the formula for the change of the Chern-Simons invariants of a path of normal form flat connections in terms of the boundary holonomy.
\begin{theorem} Let $A_t$ be a path of normal form flat $G$-connections on a manifold with torus boundary $X.$ 
Suppose that $\rho_t:\fg(X)\longrightarrow G$ is the corresponding path of holonomy representations. Consider the 
following cases:

\noindent (I)Elliptic: suppose that  $\rho_t(\mu)$ and $\rho_t(\lambda)$ are elliptic elements and the normal form of $A_t$ near the boundary  is:
$$\begin{pmatrix} 
 2\pi i(\alpha_1(t)dx+\beta_1(t)dy)& 0& 0\\
0& 2\pi i(\alpha_2(t)dx+\beta_2(t)dy)& 0 \\
0& 0 & 2\pi i(\alpha_3(t)dx+ \beta_3(t)dy)
\end{pmatrix}$$
then $$cs(A_1)-cs(A_0)\equiv \frac{1}{2}\int_0^1 [(\alpha_1\dot{\beta_1}-\dot{\alpha_1}\beta_1)+ (\alpha_2\dot{\beta_2}-\dot{\alpha_2}\beta_2)
 +( \alpha_3\dot{\beta_3}-\dot{\alpha_3}\beta_3)]dt. $$

\noindent (II) Loxodromy:  if $\rho_t(\mu)$ and $\rho_t(\lambda)$ are loxodromy  elements and the normal form of $A_t$ near the boundary  is:
$$\begin{pmatrix} 
2\pi i(\theta_1(t) dx+\tau_1(t) dy) & 0& 0\\
0& 2\pi i(\theta_2(t) dx+\tau_2(t) dy)& u(t)dx+v(t)dy  \\
0& u(t)dx+v(t)dy & 2\pi i(\theta_2(t) dx+\tau_2(t) dy)
\end{pmatrix} $$
then
$$cs(A_1)-cs(A_0)\equiv \frac{1}{2}\int_0^1 ( \theta_1\dot{\tau_1}-\dot{\theta_1}\tau_1)dt +\int_0^1(\theta_2\dot{\tau_2}-\dot{\theta_2}\tau_2)dt+
\frac{1}{4\pi^2}\int_0^1 (\dot{u}v- u\dot{v})dt.$$

\noindent (III) Parabolic:

\noindent
Case 1:  $\rho_t(\mu)$ and $\rho_t(\lambda)$ are parabolic elements whose unipotent parts have minimal polynomial $(x-1)^3.$ Suppose that the normal form of $A_t$ near the boundary  is: 
$$ \begin{pmatrix} 
 -i(p(t)dx+q(t)dy)+ & a(t)dx+b(t)dy& i(p(t)dx+q(t)dy)\\
 2\pi i(\alpha(t) dx +\beta(t) dy)&\ &\\ 
 &&\\
-(a(t)dx+b(t)dy)& 2\pi i(\alpha(t) dx +\beta(t) dy)& a(t)dx+b(t)dy \\
&&\\
-i(p(t)dx+q(t)dy)& a(t)dx+b(t)dy & i(p(t)dx+q(t)dy)+ \\
\ &\ & 2\pi i(\alpha(t) dx +\beta(t) dy)
\end{pmatrix}$$  then  
$$cs(A_1)-cs(A_0)\equiv \frac{3}{2}\int_0^1 ( \alpha\dot{\beta}-\dot{\alpha}\beta)dt.$$
\noindent
Case 2:  $\rho_t(\mu)$ and $\rho_t(\lambda)$ are parabolic elements whose unipotent parts have minimal polynomial $(x-1)^2.$ Suppose that the normal form of $A_t$ near the boundary  is: 
$$ \begin{pmatrix}
  2\pi i(\theta_1(t) dx+\tau_1(t) dy)-  & 0&  i(p(t)dx+q(t)dy)\\
  i(p(t)dx+q(t)dy)&\ &  \\
  &&\\
0& 2\pi i\theta_2(t) dx+& 0 \\
&2\pi i\tau_2(t) dy&\\
&&\\
- i(p(t)dx+q(t)dy)& 0 & 2\pi i(\theta_1(t) dx+\tau_1(t) dy)+ \\
&\ & i(p(t)dx+q(t)dy) \end{pmatrix}$$ 
then
$$cs(A_1)-cs(A_0)\equiv \int_0^1 (\theta_1\dot{\tau_1}-\dot{\theta_1}\tau_1)dt+ \frac{1}{2}\int_0^1(\theta_2\dot{\tau_2}-\dot{\theta_2}\tau_2)dt.$$
\end{theorem}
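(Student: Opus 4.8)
The plan is to reduce all four cases to a single first-variation formula for the Chern--Simons functional and then to evaluate the resulting boundary integral case by case.

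First I would recall the first-variation identity for the Chern--Simons form. For any smooth path $A_t$ of $\u21$-valued connection $1$-forms on $X$, differentiating $CS(A)=\frac{1}{8\pi^2}\mathrm{Tr}(A\wedge dA+\tfrac23 A\wedge A\wedge A)$ and integrating by parts gives
\[
\frac{d}{dt}CS(A_t)=\frac{1}{4\pi^2}\mathrm{Tr}(\dot A_t\wedge F_{A_t})-\frac{1}{8\pi^2}\,d\,\mathrm{Tr}(A_t\wedge\dot A_t),
\]
where $F_{A_t}=dA_t+A_t\wedge A_t$ and $\dot A_t=\partial_t A_t$. Since each $A_t$ is flat, $F_{A_t}=0$, so the bulk term vanishes and only the exact term survives; this is precisely why the interior behaviour of $A_t$ is irrelevant and only the near-boundary normal form enters the answer. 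Integrating over $X$, applying Stokes' theorem, and using the boundary orientation $dx\wedge dy$ fixed by the ``outward normal last'' convention (so that $\int_T dx\wedge dy=1$), I obtain
\[
cs(A_1)-cs(A_0)\equiv -\frac{1}{8\pi^2}\int_0^1\Big(\int_T \mathrm{Tr}(A_t\wedge\dot A_t)\Big)\,dt \pmod{\Z}.
\]
Here I would invoke Lemma 4.4 (gauge invariance of $\int_X CS$ for normal-form connections) together with the smoothness of the given path to guarantee that $t\mapsto\int_X CS(A_t)$ is a well-defined smooth real-valued function whose increment is computed by the formula above; this legitimizes the reduction to the boundary term and explains the ``$\bmod\ \Z$''.

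It then remains to substitute the explicit boundary normal form of $A_t$ into $\mathrm{Tr}(A_t\wedge\dot A_t)$ in each case and read off the coefficient of $dx\wedge dy$. In the elliptic case $A_t$ is diagonal, and each entry $2\pi i(\alpha_j dx+\beta_j dy)$ contributes $-4\pi^2(\alpha_j\dot\beta_j-\dot\alpha_j\beta_j)\,dx\wedge dy$, which after the factor $-\frac{1}{8\pi^2}$ yields exactly the stated sum with coefficient $\tfrac12$. The loxodromic case splits as a $1\times1$ block plus a symmetric $2\times2$ block $\left(\begin{smallmatrix}P&W\\ W&P\end{smallmatrix}\right)$ with $P=2\pi i(\theta_2 dx+\tau_2 dy)$ and $W=u\,dx+v\,dy$; the diagonal of $A_t\wedge\dot A_t$ in this block is $2(P\wedge\dot P+W\wedge\dot W)$, producing the $\theta_2,\tau_2$ term with coefficient $1$ and the new term $\frac{1}{4\pi^2}(\dot u v-u\dot v)$.

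The parabolic cases are the computational core and the main obstacle. For Case 1 I would decompose the normal form as $A_t=D_t+N_t$, where $D_t=2\pi i(\alpha\,dx+\beta\,dy)I_3$ is the scalar part and $N_t$ the remainder. Since $\mathrm{Tr}(N_t)=0$, the cross terms $\mathrm{Tr}(D_t\wedge\dot N_t)$ and $\mathrm{Tr}(N_t\wedge\dot D_t)$ vanish, while a direct $3\times3$ computation shows the nilpotent self-term $\mathrm{Tr}(N_t\wedge\dot N_t)$ vanishes as well (the first- and third-row contributions cancel). Only $\mathrm{Tr}(D_t\wedge\dot D_t)=3\cdot(-4\pi^2)(\alpha\dot\beta-\dot\alpha\beta)\,dx\wedge dy$ remains, giving the factor $\tfrac32$ and eliminating the parameters $a,b,p,q$ entirely. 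Case 2 is handled identically: writing $R=2\pi i(\theta_1 dx+\tau_1 dy)$, $S=2\pi i(\theta_2 dx+\tau_2 dy)$ and $P=i(p\,dx+q\,dy)$, the trace of $A_t\wedge\dot A_t$ collapses to $2R\wedge\dot R+S\wedge\dot S$, the unipotent parameters again cancelling, which reproduces the coefficients $1$ and $\tfrac12$. The only delicate points are keeping the orientation and sign conventions consistent throughout and verifying the cancellations in the non-diagonal parabolic matrices, but these become mechanical once the variation formula has reduced everything to evaluating $\mathrm{Tr}(A_t\wedge\dot A_t)$ on the boundary.
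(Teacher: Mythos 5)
Your proposal is correct and takes essentially the same approach as the paper: the paper gets the identical boundary reduction by applying Stokes' theorem to the Chern--Simons form of the path viewed as a connection on $X\times[0,1]$ (using $\mathrm{Tr}(F\wedge F)=0$ there), which is exactly the integrated form of your first-variation identity, and your case-by-case evaluations of the boundary integrand (including the cancellation of the off-diagonal and unipotent parameters in the loxodromic and parabolic cases) reproduce the paper's computations. The only slip is bibliographic: the gauge-invariance statement you invoke is Lemma 4.3 of the paper, not Lemma 4.4.
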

\begin{proof}
Consider the path $A_t$ as a connection on $X\times[0, 1],$  then it is a flat connection, that is $F^{A_t}\wedge F^{A_t}\equiv 0.$ by Stokes' Theorem
$$\int_XCS(A_1) - \int_XCS(A_0)-\int_{\partial X\times [0,1]} CS(A_t)=\int_{X\times [0,1]}tr(F^{A_t}\wedge F^{A_t})=0.$$  
So $$cs(A_1)-cs(A_0) \equiv \int_{\partial X\times [0,1]} \frac{1}{8\pi^2}tr(A_t\wedge dA_t+\frac{2}{3}A_t\wedge A_t\wedge A_t).$$
We now use this formula and the normal form of the connections to compute the changes of Chern-Simons invariants in each 
cases.

\noindent (I) Elliptic: in this case
$tr(A_t\wedge dA_t+\frac{2}{3}A_t\wedge A_t\wedge A_t)=4\pi^2[( \alpha_1\dot{\beta_1}-\dot{\alpha_1}\beta_1) + 
(\alpha_2\dot{\beta_2}-\dot{\alpha_2}\beta_2)  + (\alpha_3\dot{\beta_3}-\dot{\alpha_3}\beta_3) ]dx\wedge dy\wedge dt.$
 So the result follows.

\noindent (II) Loxodromy: after some computation, we get 

$tr(A_t\wedge dA_t+\frac{2}{3}A_t\wedge A_t\wedge A_t)=
4\pi^2( \theta_1\dot{\tau_1}-\dot{\theta_1}\tau_1)dx\wedge dy\wedge dt+$

$8\pi^2(\theta_2\dot{\tau_2}-\dot{\theta_2}\tau_2)dx\wedge dy\wedge dt+2(\dot{u}v- u\dot{v})dx\wedge dy\wedge dt.$

We deduce the required formula.

\noindent (III) Parabolic: 

\noindent Case 1: straightforward computations show that 
$$tr(A_t\wedge dA_t+\frac{2}{3}A_t\wedge A_t\wedge A_t)=12\pi^2(\alpha\dot{\beta}-\dot{\alpha}\beta )dx\wedge dy\wedge dt.$$
 So the change in the Chern-Simons invariants is given by the formula.

\noindent Case 2: in this case, we get: 

$tr(A_t\wedge dA_t+\frac{2}{3}A_t\wedge A_t\wedge A_t)=8\pi^2(\theta_1\dot{\tau_1}-\dot{\theta_1}\tau_1 )dx\wedge dy\wedge dt +$

$ 4\pi^2(\theta_2\dot{\tau_2}-\dot{\theta_2}\tau_2)dx\wedge dy\wedge dt.$

 So the formula follows.  
\end{proof}
In the last past of this section, we will study the difference between the Chern-Simons invariants of two different normal form connections in the elliptic case. This result will be used in the computation of the next section. Our result is similar to  Theorem 2.5 in \cite{kk2}.

Consider a manifold with toral boundary $X.$ Let A be a normal form flat $G$-connection which has the following form near the boundary $\partial X$:
$$\begin{pmatrix} 
 2\pi i(\alpha_1dx+\beta_1dy)& 0& 0\\
0& 2\pi i(\alpha_2dx+\beta_2dy)& 0 \\ 
0& 0 & 2\pi i(\alpha_3dx+ \beta_3dy)
\end{pmatrix}.$$

Let $h:\S^1\rightarrow G $ defined by $h(e^{2\pi i\theta})=(diag(e^{2\pi i\theta},e^{-2\pi i\theta},1),0,0).$ Since $h$ is a nullhomotopic map into $G$ we may find a path $h_t, \  0\leq t \leq 1 $ such that:

-  $h_t$ is constant near $0$ and $1$ 

-  $h_0\equiv 1\in G$ and $h_1=h.$

Next we use the map $h_t$ above to defined two gauge transformations on $X$ as follows. Recall that we denote by $T\times [0,1]$ with the coordinate $(e^{2\pi ix},e^{2\pi iy},r)$ a collar neighbourhood of $\partial X$ in $X$ such that $T\times\{1\}=\partial X.$  We define $g_x:T\times [0,1]\rightarrow G$ and $g_y:T\times [0,1]\rightarrow G$ by $g_x (e^{2\pi ix},e^{2\pi iy},r)=h_r(e^{2\pi ix})$ and 
$g_y (e^{2\pi ix},e^{2\pi iy},r)=h_r(e^{2\pi iy}).$
Note that we can extend $g_x$ and $g_y$ to be $1\in G$ outside the collar neighbourhood. 

It is not hard to check that $g_x \cdot A$ and $g_y \cdot A$ are also in normal form since  near $\partial X$ we have:
   
$g_x\cdot A=\begin{pmatrix} 
 2\pi i(\alpha_1+1)dx+& 0& 0\\
2\pi i\beta_1dy&&\\
&&\\
0& 2\pi i(\alpha_2-1)dx+& 0 \\
&2\pi i\beta_2dy&\\ 
&&\\
0& 0 & 2\pi i(\alpha_3dx+\beta_3dy)
\end{pmatrix}$

 $g_y\cdot A=\begin{pmatrix} 
 2\pi i\alpha_1dx+& 0& 0\\
 2\pi i(\beta_1+1)dy&&\\
 &&\\
0& 2\pi i\alpha_2dx+& 0 \\ 
&2\pi i(\beta_2-1)dy& \\
&&\\
0& 0 & 2\pi i(\alpha_3dx+ \beta_3dy)
\end{pmatrix}.$

We now state the following theorem:
\begin{theorem} Suppose that $A$ and $B$ are two gauge equivalent normal form flat $G$ connections on $X$ such that 

$A=\begin{pmatrix} 
 2\pi i(\alpha_1dx+\beta_1dy)& 0& 0\\
0& 2\pi i(\alpha_2dx+\beta_2dy)& 0 \\ 
0& 0 & 2\pi i(\alpha_3dx+ \beta_3dy)
\end{pmatrix}$ and 

$B=\begin{pmatrix} 
 2\pi i(\alpha_1+m)dx+& 0& 0\\
 2\pi i(\beta_1+n)dy&&\\
 &&\\
0& 2\pi i(\alpha_2-m)dx+& 0 \\
&2\pi i(\beta_2-n)dy&\\ 
&&\\
0& 0 & 2\pi i(\alpha_3dx+ \beta_3dy)
\end{pmatrix}$

near the boundary $\partial X,$ for some integers $m$ and $n,$ then:
$$cs(B)-cs(A)\equiv m(\beta_1-\beta_2)/2 - n(\alpha_1-\alpha_2)/2 \mod \Z.$$
\end{theorem}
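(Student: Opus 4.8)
The plan is to reduce the whole statement to the two elementary gauge transformations $g_x$ and $g_y$ constructed just before the theorem, and then to evaluate a single transgression integral using the null-homotopy $h_t$. First I would record the effect of $g_x$ and $g_y$ near $\partial X$: by the two displayed formulas, $g_x$ replaces $(\alpha_1,\alpha_2)$ by $(\alpha_1+1,\alpha_2-1)$ and leaves the $\beta_i$ and the third diagonal entry fixed, while $g_y$ replaces $(\beta_1,\beta_2)$ by $(\beta_1+1,\beta_2-1)$. Consequently $g_x^{m}g_y^{n}\cdot A$ is again in normal form and agrees with $B$ near the boundary. Since $B$ is gauge equivalent to $A$ by hypothesis and $g_x^{m}g_y^{n}\cdot A$ is manifestly gauge equivalent to $A$, the two are gauge equivalent to each other and coincide near $\partial X$, so the gauge-invariance Lemma above (the one asserting that $cs$ of a normal-form connection depends only on the gauge class and on the germ near the boundary) gives $cs(B)\equiv cs(g_x^{m}g_y^{n}\cdot A)\bmod\Z$. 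It then suffices to compute the change of $cs$ under $g_x^{m}$ and under $g_y^{n}$ separately and add.

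To compute the change under the $x$-transformation I would build it from a null-homotopy rather than from a normal-form interpolation. Let $h^{m}_t$ be a homotopy in $G$ with $h^{m}_0\equiv 1$ and $h^{m}_1=h^{m}$, where $h^{m}(e^{2\pi i x})=(\mathrm{diag}(e^{2\pi i m x},e^{-2\pi i m x},1),0,0)$, and set $\mathbf h(x,y,t)=h^{m}_t(e^{2\pi i x})$, a gauge transformation on $T\times[0,1]$ running from the identity to $g_x^{m}$. The path $A_t=\mathbf h(\cdot,t)\cdot A$ is then a path of flat connections, all gauge equivalent to $A$, and applying Stokes' theorem exactly as in the proof of Theorem 5.1 gives $cs(g_x^{m}\cdot A)-cs(A)\equiv\int_{\partial X\times[0,1]}CS(A_t)\bmod\Z$. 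I want to stress that this genuine gauge path must be used in place of a naive linear interpolation of the normal-form parameters: such an interpolation would change the eigenvalues of the $\mu$-holonomy and thus leave the gauge orbit of $A$ on $T$, so it is not realizable by flat connections on $X$ and would in fact produce the wrong sign.

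Finally I would evaluate the boundary integral through the standard gauge-variation identity
\[
CS(\mathbf h\cdot\mathbf A_0)=CS(\mathbf A_0)+\tfrac{1}{24\pi^2}\tr\big((\mathbf h^{-1}d\mathbf h)^3\big)-\tfrac{1}{8\pi^2}\,d\,\tr\big(d\mathbf h\,\mathbf h^{-1}\wedge\mathbf A_0\big),
\]
where $\mathbf A_0$ is the pullback to $T\times[0,1]$ of the diagonal boundary value of $A$. Two of the three terms drop out: $CS(\mathbf A_0)=0$ since $\mathbf A_0$ is diagonal with closed entries, and the Wess-Zumino term vanishes identically because $\mathbf h$ depends only on $(x,t)$, so $(\mathbf h^{-1}d\mathbf h)^3$ is a $3$-form in the two variables $x,t$. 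Only the transgression term survives; as $\mathbf h=1$ at $t=0$ and $d\mathbf h\,\mathbf h^{-1}=\mathrm{diag}(2\pi i m\,dx,-2\pi i m\,dx,0)$ at $t=1$, a direct evaluation of $-\tfrac{1}{8\pi^2}\int_{T\times\{1\}}\tr(d\mathbf h\,\mathbf h^{-1}\wedge\mathbf A_0)$ yields $\tfrac{m}{2}(\beta_1-\beta_2)$, and the analogous computation for the $y$-transformation yields $-\tfrac{n}{2}(\alpha_1-\alpha_2)$. Adding these gives the claimed formula; the only subtlety is that performing $g_x^{m}$ and $g_y^{n}$ in succession introduces a cross term equal to the integer $\pm mn$, which disappears modulo $\Z$. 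The main obstacle is therefore conceptual rather than computational, namely recognizing that the correct interpolation is the explicit null-homotopy gauge path, for which a dimension count annihilates the Wess-Zumino contribution and leaves only the transgression term.
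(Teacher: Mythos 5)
Your proposal is correct and takes essentially the same route as the paper: reduce to the model gauge transformations $g_x,g_y$ via the gauge-invariance lemma (Lemma 4.3), apply Freed's gauge-variation identity, kill the Wess--Zumino term by noting the transformation is independent of $y$ (resp.\ $x$), and evaluate the surviving transgression term by Stokes, with the integer cross term $\pm mn$ absorbed mod $\Z$. Your detour through the cylinder $T\times[0,1]$ is only a repackaging of the paper's computation, since the paper's $g_x$ is itself built from the null-homotopy $h_r$ using the collar parameter, so the two evaluations coincide up to relabeling $r\leftrightarrow t$.
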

\begin{proof} By Lemma 4.3, if $g$ is a gauge transformation such that $g\cdot A\equiv B$ near $\partial X$ then $cs(g\cdot A)\equiv cs(B) \mod \Z.$ Therefore, it is enough to prove that 
$cs(g_x\cdot A)-cs(A)\equiv (\beta_1-\beta_2)/2\mod \Z$ and  $cs(g_y\cdot A)-cs(A)\equiv (\alpha_2-\alpha_1)/2\mod \Z.$

By Proposition 1.27(e) of  \cite{freed} the difference between two Chern-Simons forms $CS(g_x\cdot A)-CS(A)$  equals to 
$$\frac{1}{8\pi^2}d(Tr(g_x^{-1}Ag_x\wedge g_x^{-1}dg_x)) -\frac{1}{24\pi^2}Tr(g_x^{-1}dg_x\wedge g_x^{-1}dg_x\wedge g_x^{-1}dg_x).$$
   It follows from the definition of $g_x$ that $\frac{\partial g_x}{\partial y}=0,$ so the last term in this formula vanishes.  By direct computation  we deduce that:
   
   $cs(g_x\cdot A)-cs(A)\equiv \frac{1}{8\pi^2}\int_T Tr(g_x^{-1}Ag_x\wedge g_x^{-1}dg_x) = \frac{1}{2}\int_T(\beta_1-\beta_2)dxdy$
   
   $\hskip3cm \equiv (\beta_1-\beta_2)/2.$

By using a similar argument, we can show that the formula for $g_y$ also holds. Thus the Theorem follows.
\end{proof}

\section{Applications}
In this section, we will apply our main theorem to find the Chern-Simons invariants of representations of the Seifert fibered homology sphere. We also give an explicit example, where we use the Chern-Simons invariant to deduce result about the Burns-Epstein invariant.

 Let $\Sigma=\Sigma(a_1,\ldots,a_n)$ be a Seifert fibered homology sphere, where $a_i>1$ are pairwise relatively prime integers. We put $a:=a_1\cdots a_n.$ We will denote by $\RR^*(\Sigma)$ the space of irreducible representations from $\fg(\Sigma)$ to  $G.$

The fundamental group of $\Sigma$ is given by :
$$\fg(\Sigma) = \{x_1,\ldots, x_n,h\ |h\ \text{is central,}\ \ x_1^{a_1}h^{b_1}= \cdots = x_n^{a_n}h^{b_n}=x_1\ldots x_n=1\}$$   
  where the $b_i$ are chosen with $\sum_1^n \frac{b_i}{a_i}=\frac{1}{a}.$ 
  
Let $\rho$ be an element of  $\RR^*(\Sigma).$ Since $h$ is in the center of $\fg(\Sigma),$ $\rho(h)$ is in the center of $G.$ As $x_i^{a_i}h^{b_i}=1, $ $\rho(x_i)$ is an elliptic element for all $i.$   Suppose that the representation  $\rho$ is given by 
$$\rho(h)=(diag(e^{2\pi ip_0},e^{2\pi iq_0},e^{2\pi ir_0}),2\pi(p_0+q_0+r_0),2\pi r_0) \ \text{and}$$ 
$$\rho(x_i) \sim (diag(e^{2\pi ip_i},e^{2\pi iq_i},e^{2\pi ir_i}),2\pi(p_i+q_i+r_i),2\pi r_i).$$ Here we use $\sim$ to denote the conjugacy relation in $G.$

As $\rho(h)$ is central, we have $p_0\equiv q_0\equiv r_0 \mod \Z.$ 
Since $\rho$ must preserve the relation $x_i^{a_i}h^{b_i}=1, \ i=1\ldots n,$ we deduce that :
 
(1) \hskip 2cm  $a_ip_i+b_ip_0=s_i,$ $(a_iq_i+b_iq_0)=-s_i$ are integers and 

(2) \hskip 2cm  $a_ir_i+b_ir_0=0.$ 

\begin{theorem} \label{sf}
The Chern-Simons invariant of $\rho$ is $$cs(\rho)\equiv   \frac{1}{2}a((\sum_{i=1}^np_i)^2+(\sum_{i=1}^nq_i)^2+(\sum_{i=1}^nr_i)^2).$$
\end{theorem}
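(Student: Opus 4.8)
The plan is to compute $cs(\rho)$ by the cut-and-paste argument advertised in the introduction: decompose $\Sigma$ into pieces on each of which the connection can be deformed to one whose Chern-Simons invariant is already known, then assemble via Theorem 5.1. Concretely, I realize $\Sigma=\Sigma(a_1,\dots,a_n)$ as a Seifert fibration over $\S^2$ with $n$ exceptional fibers and remove fibered tubular neighborhoods $N_1,\dots,N_n$ of those fibers; the complement $M_0$ is a circle bundle over the $n$-holed sphere $P$, and since $P$ is planar this bundle is trivial, so $M_0\cong P\times\S^1$ with the regular fiber $h$ generating the $\S^1$-factor. On each boundary torus $T_i=\partial N_i$ I fix the basis $(\mu_i,\lambda_i)$, where $\mu_i=x_i^{a_i}h^{b_i}$ is the meridian of $N_i$ and $\lambda_i=x_i^{c_i}h^{d_i}$ is a longitude, choosing $c_i,d_i$ with $a_id_i-b_ic_i=1$. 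Because $\rho(h)$ is central and every $\rho(x_i)$ is elliptic, on each $T_i$ the two boundary holonomies commute and are simultaneously diagonalizable; hence all boundary holonomies are of elliptic type and every piece falls under case (I) of Theorem 5.1. The Chern-Simons integral is additive over this decomposition, and after matching normal forms along the $T_i$ (Lemma 4.3, with Theorem 5.3 available for the integer discrepancies between admissible lifts) I get $cs(\rho)\equiv cs(A|_{M_0})+\sum_{i=1}^n cs(A|_{N_i}) \mod \Z$.

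Next I would compute the solid-torus contributions. On $N_i$ the meridian bounds, and indeed $\rho(\mu_i)=\rho(x_i^{a_i}h^{b_i})=1$ by the defining relation, so in the normal form the meridian parameters are $(\alpha_1,\alpha_2,\alpha_3)=(a_ip_i+b_ip_0,\,a_iq_i+b_iq_0,\,a_ir_i+b_ir_0)=(s_i,-s_i,0)$ by (1) and (2), while the longitude parameters $(\beta_1,\beta_2,\beta_3)$ are read off from $\rho(\lambda_i)=\rho(x_i)^{c_i}\rho(h)^{d_i}$. I then connect $A|_{N_i}$ to the trivial connection by scaling $\beta_j\mapsto t\beta_j$ while keeping the meridian parameters fixed; this stays flat and in normal form, keeps $\rho_t(\mu_i)=1$ (so it remains a connection on the solid torus), and terminates at the abelian connection $diag(2\pi i s_i\,dx,\,-2\pi i s_i\,dx,\,0)$, whose Chern-Simons form vanishes identically. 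Case (I) of Theorem 5.1 then gives $cs(A|_{N_i})\equiv\tfrac12\sum_j\alpha_j\beta_j$ as an explicit expression in $p_i,q_i,r_i,p_0,q_0,r_0$ and $c_i,d_i$.

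For the product piece $M_0\cong P\times\S^1$ I would deform the central fiber holonomy $\rho(h)$ toward the identity through representations of $\fg(M_0)$ in which $h$ remains central, landing at a flat connection that factors through $\fg(P)=F_{n-1}$ and can be represented by a pullback from the planar base $P$; such a connection has vanishing Chern-Simons integral, since a $3$-form pulled back from a surface is zero. Treating $M_0$ as a manifold with the several torus boundaries $T_i$ (so the variation formula contributes one boundary integral per $T_i$), Theorem 5.1(I) expresses $cs(A|_{M_0})$ as a sum of boundary integrals governed by the paths of $(p_i,q_i,r_i)$ and $(p_0,q_0,r_0)$ on each $T_i$. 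Assembling all pieces and substituting the relations (1), (2), the centrality $p_0\equiv q_0\equiv r_0 \mod \Z$, and $\sum_i b_i/a_i=1/a$ — which give $\sum_i p_i=\sum_i s_i/a_i-p_0/a$, $\sum_i q_i=-\sum_i s_i/a_i-q_0/a$, and $\sum_i r_i=-r_0/a$ — one collects the terms into $\tfrac12 a\big((\sum p_i)^2+(\sum q_i)^2+(\sum r_i)^2\big) \mod \Z$.

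The main obstacle is the product piece together with the final reconciliation: the solid-torus contributions individually depend on the auxiliary framing integers $c_i,d_i$, so the essential work is to show that the $M_0$-contribution and the integer corrections of Theorem 5.3 cancel exactly this framing-dependence. This requires, first, producing a genuine path of flat connections on $P\times\S^1$ that deforms $\rho(h)$ to $1$ while keeping $h$ central and staying in normal form on every $T_i$ simultaneously; and second, carrying out the mod-$\Z$ bookkeeping so that the $s_i,c_i,d_i$ drop out and only the symmetric combination of $\sum p_i,\sum q_i,\sum r_i$ survives. The squared structure of the answer arises precisely from the $\tfrac12\sum_j\alpha_j\beta_j$ terms and the boundary integrals, after invoking $\sum_i b_i/a_i=1/a$.
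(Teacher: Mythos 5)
Your strategy differs genuinely from the paper's: the paper cuts $\Sigma$ along a \emph{single} torus, into the complement $X$ of one exceptional fiber and one solid torus $S$, whereas you cut along all $n$ tori into $P\times\S^1$ plus $n$ solid tori. The outline (deform to connections with known Chern--Simons invariant, track the change via the variation formula) is the right one, but the proposal has genuine gaps, and the decisive one you flag yourself: the entire quantitative content of the theorem --- the emergence of $\tfrac{1}{2}a\bigl((\sum p_i)^2+(\sum q_i)^2+(\sum r_i)^2\bigr)$ --- is asserted (``one collects the terms into\dots'') rather than derived, and you yourself state that showing the $M_0$-contribution cancels the framing dependence on $c_i,d_i$ is ``the essential work'' still to be done. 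In the paper, exactly that algebra (the steps (*) through (****), using relations (1), (2) and $\sum_i b_i/a_i=1/a$) \emph{is} the proof; without it you have a plan, not a proof. Moreover, Theorem 5.1, Proposition 4.2, Lemma 4.3 and Theorem 5.2 are all stated for a manifold with one torus boundary, while your piece $M_0$ has $n$ boundary tori; you would need multi-boundary versions (plausible, since the Stokes argument localizes at each boundary component, but unproved --- the paper's two-piece decomposition exists precisely to avoid this), and the orientation/basis bookkeeping in your additivity formula $cs(\rho)\equiv cs(A|_{M_0})+\sum_i cs(A|_{N_i})$ must be squared with the ``outward normal last'' convention on each side of $T_i$.

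There is also a step that is false as stated: on $N_i$ there is \emph{no} flat connection globally equal to $diag(2\pi i s_i\,dx,-2\pi i s_i\,dx,0)$, because each diagonal entry would be a closed $1$-form with period $2\pi i s_i\neq 0$ on the meridian, which bounds a disk in $N_i$. The endpoint of your scaling path only has this shape \emph{near} $\partial N_i$; in the interior it is a nontrivial pure-gauge connection whose Chern--Simons density need not vanish pointwise, so ``the Chern--Simons form vanishes identically'' is not a valid justification. The conclusion $cs\equiv 0 \pmod \Z$ is nevertheless correct, but it needs an argument: either apply Theorem 5.2 with $(m,n)=(s_i,0)$ to compare with the trivial connection (this is exactly how the paper handles its solid torus $B_0$), or choose the extension that factors through the meridian disk --- so the Chern--Simons form is pulled back from a surface and vanishes --- and then invoke Lemma 4.3 to compare it with your endpoint, which agrees with it near the boundary. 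With those repairs the route could in principle be completed, but as submitted the formula in the statement is never actually established.
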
  
\begin{proof}
We write $\Sigma = X \cup (-S),$ where $X$ is the complement of the $n^{th}$-exceptional fiber and $S$ is the solid torus neighborhood of the $n^{th}$-exceptional fiber. We then find paths of representations on $X$ and $S$ which connect $\rho$ to the trivial representation and apply formula in Theorem 5.1 to compute the Chern-Simons invariants on each $X$ and $S.$

\noindent \textit {Step 1. Computation on $X.$}

Note that $\fg(X)$ is obtained from $\fg(\Sigma)$ by eliminating the relation $x_n^{a_n}h^{b_n}$ and $X$ has a natural meridian and longitude $\mu=x_n^{a_n}h^{b_n}$ and $\lambda = x_n^{-a_1\ldots a_{n-1}}h^c$ where $c=a_1\ldots a_{n-1}\sum_1^{n-1} \frac{b_i}{a_i}.$

 After conjugation, we may assume that 
$$\rho(x_n)=(diag(e^{2\pi ip_n}, e^{2\pi iq_n}, e^{2\pi ir_n}),2\pi(p_n+q_n+r_n),2\pi r_n).$$
Since each conjugacy class in $G$ is connected, we can deform $ \rho|_X(x_i)$ within its conjugacy class to the diagonal form for $i=1,\dots,n-1.$ 
This means that we can find a path of representations 
$$\rho_t:\fg(X)\longrightarrow G, 0\leq t\leq 1,$$
 such that $ \rho_0= \rho|_X,$ $ \rho_t(h)= \rho(h)$ for all $t$  and
 $$ \rho_1(x_i)=(diag(e^{2\pi ip_i},e^{2\pi iq_i},e^{2\pi ir_i}),2\pi(p_i+q_i+r_i),2\pi r_i), i=1,\dots,n-1.$$
 For this path of representation we have:
$$ \rho_t(x_n)=(diag(e^{2\pi ip(t)}, e^{2\pi iq(t)}, e^{2\pi ir(t)}),2\pi(p(t)+q(t)+r(t)),2\pi r(t)),$$ 
where $p(0)=p_n, q(0)=q_n, r(0)=r_n$ and

\noindent $p(1)=-\sum_1^{n-1}p_i, q(1)=-\sum_1^{n-1}q_i, r(1)=-\sum_1^{n-1}r_i.$

Therefore, we get:
$$\rho_t(\mu)=(M(t), 2\pi a_n(p(t)+q(t)+r(t))+2\pi b_n(p_0+q_0+r_0),2\pi a_nr(t)+2\pi b_nr_0),$$
$$\text{where}\ M(t)=diag(e^{2\pi i(a_np(t)+b_np_0)}, e^{2\pi i(a_nq(t)+b_nq_0)}, e^{2\pi i(a_nr(t)+b_nr_0)}), \ \text{and}$$

\noindent $\rho_t(\lambda)=(N(t), -2\pi a_1\cdots a_{n-1}(p(t)+q(t)+r(t))+2\pi c(p_0+q_0+r_0),$
$$ -2\pi a_1\cdots a_{n-1}r(t)+2\pi cr_0),\ \text{where}\hskip3.5cm $$
$$N(t)=diag(e^{2\pi i(-a_1\cdots a_{n-1}p(t)+cp_0)}, e^{2\pi i(-a_1\cdots a_{n-1}q(t)+cq_0)}, e^{2\pi i(-a_1\cdots a_{n-1}r(t)+cr_0)}).$$
So by Proposition 4.2 we get a path of normal form $\u21-$valued flat connections on $X$ which is given near $\partial X$ by  :
$$A_t=diag(2\pi i((a_np(t)+b_np_0)dx+(-a_1\cdots a_{n-1}p(t)+cp_0)dy), 2\pi i((a_nq(t)+b_nq_0)dx+$$
$$(-a_1\cdots a_{n-1}q(t)+cq_0)dy), 2\pi i((a_nr(t)+b_nr_0)dx+(-a_1\cdots a_{n-1}r(t)+cr_0)dy)).$$
We now use Theorem 5.1 to compute the difference of Chern-Simons invariants:

$cs(A_1)-cs(A_0)\equiv -\frac{1}{2}\int_0^1(a_nc+a_1\cdots a_{n-1}b_n)(p_0\dot p(t)+q_0\dot q(t))+ r_0\dot r(t))dt$

\hskip2.6cm $=-\frac{1}{2}(a_nc+a_1\cdots a_{n-1}b_n)(p_0p(t)+q_0q(t)+r_0r(t))|_0^1.$

So we arrive at:
$$(*) \qquad  cs(A_0)\equiv cs(A_1) -\frac{1}{2}(p_0\sum_{i=1}^np_i+q_0\sum_{i=1}^nq_i+r_0\sum_{i=1}^nr_i).$$
Now, by using relations  (1) and (2), we can write   

\noindent $A_1=diag(2\pi i(-a_n\sum_{i=1}^{n-1}p_i+b_np_0)dx+2\pi i(a_1\cdots a_{n-1}\sum_{i=1}^{n-1}\frac{s_i}{a_i})dy, 2\pi i(-a_n\sum_{i=1}^{n-1}q_i$

\hskip0.8cm $+b_nq_0)dx- 2\pi i(a_1\cdots a_{n-1}\sum_{i=1}^{n-1}\frac{s_i}{a_i})dy, 2\pi i(a_n\sum_{i=1}^{n-1}r_i+b_nr_0)dx).$

Since $a_1\cdots a_{n-1}\sum_{i=1}^{n-1}\frac{s_i}{a_i}$ is integer, by using Theorem 5.2, we find that:

\noindent $(**) \quad cs(A_1)\equiv cs(A_1')-\frac{1}{2}(a_1\cdots a_{n-1}\sum_{i=1}^{n-1}\frac{s_i}{a_i})(-a_n\sum_{i=1}^{n-1}p_i+b_np_0+$

$\hskip3cm a_n\sum_{i=1}^{n-1}q_i-b_nq_0).$

\noindent Where $A_1'=diag(2\pi i(-a_n\sum_{i=1}^{n-1}p_i+b_np_0)dx, 2\pi i(-a_n\sum_{i=1}^{n-1}q_i+b_nq_0)dx,$

\hskip1.8cm $2\pi i(-a_n\sum_{i=1}^{n-1}r_i+b_nr_0)dx),$ near the boundary $\partial X.$

As the holonomy representation $\rho_1'$ of the $A_1'$ is abelian, it factors through $\fg(X)\rightarrow H_1(X)\equiv \Z.$ We find that $\rho_1'(\lambda)=1\in G$ and

\noindent $\rho_1'(\mu)=(diag(e^{2\pi i(-a_n\sum_{i=1}^{n-1}p_i+b_np_0)}, e^{2\pi i(-a_n\sum_{i=1}^{n-1}q_i+b_nq_0)}, e^{2\pi i(-a_n\sum_{i=1}^{n-1}r_i+b_nr_0)}), $

\hskip0.5cm $-2\pi a_n \sum_{i=1}^{n-1} (p_i+q_i+r_i)+2\pi b_n(p_0+q_0+r_0), -2\pi a_n\sum_{i=1}^{n-1}r_i+b_nr_0).$

So we can deform $\rho_1'$ to the trivial representation and get a path of connections $A'_t$ linking $A_1'$ to the trivial connection which obviously has zero Chern-Simons invariant. Applying Theorem 5.1 to the path:

$A'_t =diag(2t\pi i(-a_n\sum_{i=1}^{n-1}p_i+b_np_0)dx, 2t\pi i(-a_n\sum_{i=1}^{n-1}q_i+b_nq_0)dx,$

\hskip1.8cm $2t\pi i(-a_n\sum_{i=1}^{n-1}r_i+b_nr_0)dx),\ 0\leq t\leq 1,$

we conclude that $cs(A'_1)\equiv 0.$ Now combine this with (*) and (**), we get:

$cs(A_0)\equiv -\frac{1}{2}a(\sum_{i=1}^{n-1}\frac{s_i}{a_i})(-\sum_{i=1}^{n-1}p_i+\frac{b_n}{a_n}p_0+\sum_{i=1}^{n-1}q_i-\frac{b_n}{a_n}q_0) -$

$\hskip3.5cm 
\frac{1}{2}(p_0\sum_{i=1}^np_i+q_0\sum_{i=1}^nq_i+r_0\sum_{i=1}^nr_i).$

Where $A_0$ is the flat connection form corresponding the representation $\rho|_X.$ Now using (1) we can rewrite this as:

$(***) \qquad  cs(A_0)\equiv -\frac{1}{2}a(\sum_{i=1}^{n-1}\frac{s_i}{a_i})(-\sum_{i=1}^{n}p_i+\sum_{i=1}^{n}q_i+2\frac{s_n}{a_n}) -$

$\hskip3.8cm 
\frac{1}{2}(p_0\sum_{i=1}^np_i+q_0\sum_{i=1}^nq_i+r_0\sum_{i=1}^nr_i).$

\noindent \textit {Step 2. Computation on the solid torus}

We will denote the connection form corresponding to $\rho|_S$ by $B_0.$ Near the boundary $\partial X$ then $B_0$ coincides with $A_0$ and given by:

\noindent $B_0=diag(2\pi i((a_np_n+b_np_0)dx+(-a_1\cdots a_{n-1}p_n+cp_0)dy)), 2\pi i((a_nq_n+$

\hskip0.3cm $b_nq_0)dx+ (-a_1\cdots a_{n-1}q_n+cq_0)dy)), 2\pi i(-a_1\cdots a_{n-1}r_n+cr_0)dy).$

By  (1), the numbers $a_np_n+b_np_0=s_n$ and $a_nq_n+b_nq_0=-s_n$ are integers. So apply Theorem 5.2 we find that 

$cs(B_0)\equiv cs(B_1)+\frac{1}{2}s_n( -a_1\cdots a_{n-1}p_n+cp_0+a_1\cdots a_{n-1}q_n-cq_0),$

where

 $B_1 = diag(2\pi i(-a_1\cdots a_{n-1}p_n+cp_0)dy, 2\pi i (-a_1\cdots a_{n-1}q_n+$

\hskip2cm $cq_0)dy, 2\pi i(-a_1\cdots a_{n-1}r_n+cr_0)dy)$ near the boundary $\partial X.$

Note that  $B_1$ is a connection form corresponding to an abelian representation and it involves only the $dy$ terms. Carrying out a similar computation as we did for the connection $A_1'$ in the previous step, we conclude that $cs(B_1)\equiv 0.$ Therefore we can write
$$cs(B_0)\equiv \frac{1}{2}a\frac{s_n}{a_n}( -p_n+p_0\sum_{i=1}^{n-1}\frac{b_i}{a_i}+q_n-q_0\sum_{i=1}^{n-1}\frac{b_i}{a_i}).$$
Moreover, from (1), we deduce that $$p_0\sum_{i=1}^{n-1}\frac{b_i}{a_i}=\sum_{i=1}^{n-1}\frac{s_i}{a_i}-\sum_{i=1}^{n-1}p_i$$ and that 
$$q_0\sum_{i=1}^{n-1}\frac{b_i}{a_i}=-\sum_{i=1}^{n-1}\frac{s_i}{a_i} -\sum_{i=1}^{n-1}q_i.$$ So we arrive at:
$$(****)\qquad cs(B_0)\equiv \frac{1}{2}a\frac{s_n}{a_n}( -\sum_{i=1}^np_i+\sum_{i=1}^nq_i+2\sum_{i=1}^{n-1}\frac{s_i}{a_i}).$$
Now as  $\Sigma = X \cup (-S),$ we see that $cs(\rho)=cs(A_0)-cs(B_0).$ Note that the term $a\frac{s_n}{a_n}\sum_{i=1}^{n-1}\frac{s_i}{a_i}$ appearing in  (***) and (****) is an integer and therefore can be ignored. What we find is 

$cs(\rho)\equiv \frac{1}{2}a(\sum_{i=1}^np_i\sum_{i=1}^n\frac{s_i}{a_i}-\sum_{i=1}^nq_i\sum_{i=1}^n\frac{s_i}{a_i}) $

\hskip3cm $-\frac{1}{2}(p_0\sum_{i=1}^np_i+q_0\sum_{i=1}^nq_i+r_0\sum_{i=1}^nr_i).$

It follows from (1) and (2) that: 

$\sum_{i=1}^n\frac{s_i}{a_i}=\sum_{i=1}^np_i+\frac{p_0}{a}=-\sum_{i=1}^nq_i-\frac{q_0}{a}$ and 
$r_0=-a\sum_{i=1}^nr_i.$ 

So, finally, we arrive at the needed formula
$$cs(\rho)\equiv   \frac{1}{2}a((\sum_{i=1}^np_i)^2+(\sum_{i=1}^nq_i)^2+(\sum_{i=1}^nr_i)^2).$$ 
\end{proof}

We can use the above theorem to find \textit {all the possible values} of the Chern-Simons invariants of representations if we  know the representation space $\RR^*(\Sigma).$ In the general case, this space is still hard to describe in details. Fortunately, for the case of Seifert fibered homology sphere with three singular fibers, the representation space has been studied in \cite{k2} and so we are able to find all the the possible values of the Chern-Simons invariants.

 As an illustration,  we present an example of the homology sphere $\Sigma(2,3,11).$ Its fundamental group has the following presentation.
{\small
$$\fg(\Sigma(2,3,11))= \langle
x_1,x_2,x_3,h| \ h\ \text{central},\ x_1^2h^{-1}=x_2^3h=x_3^{11}h^2=x_1x_2x_3=1
\rangle.$$}
By the computation in  \cite{k2}, we know that   $\Sigma(2,3,11)$ has five distinct irreducible representations into $\PU.$ By homological reason, each $\PU$ representation has a unique lift to a representation into the universal covering group $G.$ By  further computations, we obtain the following list of representations into $G.$ 

\noindent 1) $\rho(x_1) \sim (diag(1,-1,-1), 0,-\pi), \rho(x_2) \sim  (diag(1,e^{4\pi i/3},e^{2\pi i/3}),0, \frac{2\pi}{3}),$ 

$ \rho(x_3) \sim  (diag(e^{12\pi i/11},e^{6\pi i/11},e^{4\pi i/11}),0, \frac{4\pi}{11}),\quad \rho(h)\sim (I, 0,-2\pi).$

\noindent 2) $\rho(x_1) \sim (diag(1,-1,-1),0,\pi),  \rho(x_2) \sim  (diag(e^{2\pi i/3}, 1,e^{4\pi i/3}),0, -\frac{2\pi}{3})),$ 

$ \rho(x_3) \sim  (diag(e^{-12\pi i/11},e^{-6\pi i/11},e^{-4\pi i/11}),0,-\frac{4\pi}{11}),\quad  \rho(h)\sim (I, 0,2\pi).$
 
\noindent 3) $\rho(x_1) \sim (diag(-1,-1, 1), 0, 2\pi), \rho(x_2) \sim  (diag(1,e^{4\pi i/3},e^{2\pi i/3}),0,- \frac{4\pi}{3})),$ 

$ \rho(x_3) \sim  (diag(e^{-4\pi i/11},e^{-10\pi i/11},e^{-8\pi i/11}),0,-\frac{8\pi}{11}),\quad  \rho(h)\sim (I, 0,4\pi).$

\noindent 4) $\rho(x_1) \sim (diag(-1,-1, 1), 0, -2\pi), \rho(x_2) \sim  (diag(e^{2\pi i/3},1,e^{4\pi i/3}),0,\frac{4\pi}{3})),$ 

$ \rho(x_3) \sim  (diag(e^{4\pi i/11},e^{10\pi i/11},e^{8\pi i/11}),0, \frac{8\pi}{11}), \quad \rho(h)\sim (I, 0,-4\pi).$

\noindent 5) $\rho(x_1) \sim (diag(-1,-1, 1), 0, 0), \rho(x_2) \sim  (diag(e^{2\pi i/3},e^{4\pi i/3},1),0,0),$ 

$ \rho(x_3) \sim  (diag(e^{-2\pi i/11},e^{2\pi i/11},1),0, 0),\quad \rho(h)\sim (I, 0, 0).$

Using Theorem \ref{sf}, we can find the values of  the Chern-Simons invariants of $\Sigma(2,3,11)$  as below. 

\begin{center}
 \begin{tabular}{cccccc}
 Case \qquad &\qquad 1 &\qquad 2 &\qquad 3 &\qquad 4 &\qquad 5 \\
 $cs(\rho) \mod \Z $ &\qquad $\frac{13}{66}$ &\qquad $\frac{13}{66} $ &\qquad $ \frac{7}{66}$ &\qquad $\frac{7}{66}$  &\qquad $\frac{25}{66}$
\end{tabular}
\end{center}

 Therefore, we can deduce that: \textit{the Burn-Epstein invariant $(\mod \Z)$ of any spherical CR structure on $\Sigma(2,3,11)$ with irreducible holonomy representation is one of the values above. }
\begin{remark}
 - Little is known about the problem of classification of spherical CR structures on 3-manifolds.  On Seifert fibered  manifolds,  the only work done so far is the classification of the $\S^1$-invariant CR structure by Kamishima and Tsuboi \cite{kt}. We do not know any example of spherical CR structure whose holonomy is the representations given in five cases above.
 
- Recently,  Biquard and Herzlich \cite{bi1} introduce an invariant $\nu$ for strictly pseudoconvex $3$-dimensional CR manifold $M$ and show that their invariant agrees with three times the Burn-Epstein invariant up to a constant. Furthermore, by relating the $\nu$ invariant to a kind of eta invariant, Biquard, Herzlich and Rumin \cite{bi2} are able to give an explicit formula for the $\nu$ invariant of the transverse $\S^1$-invariant CR structure on a Seifert fibered manifold. It would be interesting to work out the relation between the $\nu$ invariant, modulo an integer,  and the metric Chern-Simons invariant. 
 \end{remark} 

\end{document}